\documentclass[a4paper,10pt]{article}

\usepackage{graphicx}
\usepackage[a4paper, margin=2.5cm,top=3cm, bottom=3cm]{geometry}
\usepackage{amsthm}
\usepackage{amssymb}
\usepackage{amsmath}
\usepackage{hyperref}
\usepackage{physics}
\usepackage{xcolor}
\usepackage{enumerate}
\usepackage{enumitem}
\usepackage{listings}
\usepackage{complexity}
\usepackage{blkarray}
\usepackage{lmodern}

\usepackage[font=small,labelfont=bf]{caption}
\usepackage[font=small,labelfont=normalfont,labelformat=simple]{subcaption}
\widowpenalty10000
\clubpenalty10000

\graphicspath{{figs/}}

\newtheorem{theorem}{Theorem}[section]
\newtheorem{definition}[theorem]{Definition}
\newtheorem{lemma}[theorem]{Lemma}
\newtheorem{corollary}[theorem]{Corollary}
\newtheorem{question}{Question}

\newtheorem{proposition}[theorem]{Proposition}
\newtheorem{conjecture}{Conjecture}
\newtheorem{problem}{Problem}

\newcommand{\HG}{\mathrm{HG}}

\hypersetup{
	colorlinks,
	linkcolor={red!60!black},
	citecolor={green!50!black},
	urlcolor={blue!80!black}
}

\newcommand*\samethanks[1][\value{footnote}]{\footnotemark[#1]}

\author
{
Charlotte Knierim\thanks{Department of Computer Science, ETH Z\"urich, Switzerland\newline $\{$cknierim$\vert$anders.martinsson$\vert$ raphaelmario.steiner$\}$@inf.ethz.ch}

\and

Anders Martinsson \samethanks[1]

\and 

Raphael Steiner \samethanks[1] \thanks {Supported by an ETH Postdoctoral Fellowship.}
}

\date{\today}

\title{Hat guessing numbers of strongly degenerate graphs}

\begin{document}
\maketitle

\begin{abstract}
Assume $n$ players are placed on the $n$ vertices of a graph $G$. The following game was introduced by Winkler: An adversary puts a hat on each player, where each hat has a colour out of $q$ available colours. The players can see the hat of each of their neighbours in $G$, but not their own hat. Using a prediscussed guessing strategy, the players then simultaneously guess the colour of their hat. The players win if at least one of them guesses correctly, else the adversary wins. The largest integer $q$ such that there is a winning strategy for the players is denoted by $\HG(G)$, and this is called the \emph{hat guessing number} of $G$.

Although this game has received a lot of attention in the recent years, not much is known about how the hat guessing number relates to other graph parameters. For instance, a natural open question is whether the hat guessing number can be bounded from above in terms of degeneracy. In this paper, we prove that the hat guessing number of a graph can be bounded from above in terms of a related notion, which we call \emph{strong degeneracy}. We further give an exact characterisation of graphs with bounded strong degeneracy. As a consequence, we significantly improve the best known upper bound on the hat guessing number of outerplanar graphs from $2^{125000}$ to $40$, and further derive upper bounds on the hat guessing number for any class of $K_{2,s}$-free graphs with bounded expansion, such as the class of $C_4$-free planar graphs, more generally $K_{2,s}$-free graphs with bounded Hadwiger number or without a $K_t$-subdivision, and for Erd\H{o}s-R\'enyi random graphs with constant average degree.

\end{abstract}

\section{Introduction}
Multiple variants of hat guessing games have been studied in the past. They all have in common that an adversary puts coloured hats on the heads of a set of players, and some subset of the players need to be able to guess their own hat's colour correctly given only some partial information about the hat colours of certain other players (excluding themselves).  
In this paper we focus on a version introduced by Butler, Hajiaghayi and Kleinberg~\cite{butler2009hat}, which is sometimes also referred to as \emph{Winkler's hats game}, cf.~\cite{gadgeorg15}.

In this version, the $n$ players are placed on the vertices of an $n$-vertex graph $G$ and their vision is restricted to the players that sit on neighbouring vertices in $G$. Before the game starts, the players agree on a strategy how to make guesses for their hat colours. Formally, we can think of such a strategy as a family of functions $(f_v)_{v\in V(G)}$ where for every vertex $v\in V(G)$ the function $f_v$ describes the guess of the player sitting on vertex $v$ and $f_v\colon [q]^{N_G(v)} \rightarrow [q]$ where $q$ is the number of colours for the hats (known to the players).
The adversary then chooses which hat to give to which player (while knowing the strategy $(f_v)_{v\in V(G)}$). This adversarial colouring can be described as a mapping $c\colon V(G)\to [q]$. We say a strategy $(f_v)_{v\in V(G)}$ is \emph{winning} if for all adversarial colourings $c$, there it at least one vertex $v\in V(G)$ such that $f_v((c(w))_{w \in N_G(v)})=c(v)$ (i.e., at least one vertex correctly guesses the colour of its hat). 
We denote by $\HG(G)$ the \emph{hat guessing number} of $G$, which is defined as the largest integer $q$ such that there exists a strategy $(f_v)_{v\in V(G)}$ that is winning on $q$ colours.

In general, it seems that both deriving good lower bounds and good upper bounds on the hat guessing number, even of very specific graphs, is extremely challenging. Accordingly, in the literature there are only very few examples and classes of graphs $G$ for which $\HG(G)$ has been determined exactly. For example, the cliques $K_n$ are know to have hat guessing number exactly $n$ (compare for instance~\cite{feige2004you}), but for the complete bipartite graphs $K_{m,n}$ determining even only the correct asymptotic growth of their hat guessing numbers remains an open problem. For instance, for $m=n$ the best known upper bound on $\HG(K_{n,n})$ is $n+1$, but the best known lower bound proved by Alon et al.~\cite{alon2020hat} is of magnitude $\Omega(n^{1/2-o(1)})$. Butler et al. proved in~\cite{butler2009hat} (see also~\cite{alon2020hat,bosek2021hat}) that all trees on at least two vertices have hat guessing number $2$, and also the hat guessing numbers of cycles have been determined exactly by Szczechla in~\cite{szczechla2017three}\footnote{more precisely, it is known that $\HG(C_n)=3$ if $n$ is equal to $4$ or divisible by $3$, and $\HG(C_n)=2$ otherwise}.
Furthermore, using a random colouring approach for the adversary, it can be shown via the Lov\'{a}sz local lemma that the hat guessing number of all graphs with maximum degree at most $\Delta$ is bounded from above by $e\Delta$~\cite{farnik2015hat}. More bounds on hat guessing numbers for specific graph classes can be found in~\cite{bosek2021hat,he2020hat,he2020hat2}.

In order to gain a qualitative understanding of the complex parameter $\text{HG}(G)$, it is natural to relate it to other known graph parameters. Intuitively, on very sparse graphs, such as graphs which are $d$-degenerate for a constant $d \in \mathbb{N}$, the players should not be able to win the game played with an arbitrarily large number of colours. Vice versa, possibly every graph with a sufficiently high minimum degree allows for winning the guessing game with many colours. This intuition has inspired the following two open problems which were raised in~\cite{alon2020hat} and~\cite{farnik2015hat}.
\begin{problem}[cf.~Problem 1.4 in~\cite{alon2020hat}]\label{problems}
Do there exist functions $f_1, f_2:\mathbb{N} \rightarrow \mathbb{N}$ such that the following hold?
\begin{enumerate}
    \item If a graph $G$ is $d$-degenerate, then $\text{HG}(G) \le f_1(d)$.
    \item If a graph $G$ has minimum degree at least $f_2(d)$, then $\text{HG}(G) \ge d$.
\end{enumerate}
\end{problem}

It was proved in~\cite{he2020hat2} that for every $d \in \mathbb{N}$ there exist $d$-degenerate graphs with hat guessing number as large as $2^{2^{d-1}}$, which shows that $f_1$, if it exists, must have at least a doubly-exponential growth.
If both of the statements in Problem~\ref{problems} held true, this would mean that the hat guessing number of graphs is an equivalent parameter to the degeneracy of the graph, which would substantially enhance our structural understanding of the hat guessing game. However, the above problems remain widely open.

In this paper, we are motivated in particular by Problem (1.) above.

We provide positive evidence, by showing that the hat guessing number of a graphs can bounded from above in terms of what we call \emph{strong degeneracy}. This is a strengthened variant of the usual degeneracy notion for graphs, defined as follows.

\begin{definition}\label{def:strongdeg}
Let $d \ge 1$ be an integer and $G$ a graph. We say that a vertex $v \in V(G)$ is \emph{$d$-removable in $G$} if $d_G(v) \le d$ and if $|\{w \in N_G(v)\vert d_G(w)>d\}| \le 1$.
We say that a graph $G$ is \emph{strongly $d$-degenerate} if every non-empty subgraph $G'$ of $G$ contains a vertex which is $d$-removable in $G'$. 
\end{definition}

Definition~\ref{def:strongdeg} is reminiscent of the usual degeneracy notion: A graph $G$ is $d$-degenerate if and only if every non-empty subgraph $G'$ of $G$ contains a vertex of degree at most $d$. Comparing this to Definition~\ref{def:strongdeg} we can easily see that every strongly $d$-degenerate graph is also $d$-degenerate. However, the same is not true in reverse. For a simple example, consider the complete bipartite graph $K_{2,s}$ for some integer $s \in \mathbb{N}$. This graph is clearly $2$-degenerate, but contains no $(s-1)$-removable vertex and is therefore not strongly $d$-degenerate for any $d<s$.
The following is our main result. 

\begin{theorem}\label{thm:hattiness}
Let $d \in \mathbb{N}$. Then every strongly $d$-degenerate graph $G$ satisfies $\HG(G) \le (2d)^d$.
\end{theorem}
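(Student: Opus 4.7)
I would prove Theorem~\ref{thm:hattiness} by induction on $|V(G)|$. The base case is the empty graph. For the inductive step, by strong $d$-degeneracy, pick a vertex $v$ that is $d$-removable in $G$, with neighborhood $\{w_1,\ldots,w_k\}$ where $k \le d$ and $d_G(w_i) \le d$ for all $i \ge 2$ (so that $w_1$ is the single ``possibly large-degree'' neighbor). Suppose, toward a contradiction, that the players have a winning strategy $(f_u)_{u \in V(G)}$ on $G$ with $q = (2d)^d + 1$ colors. The plan is to exhibit a winning strategy on the induced subgraph $G'' := G - \{v, w_2,\ldots,w_k\}$ using $q$ colors. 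Since $G''$ is a subgraph of $G$ and strong $d$-degeneracy is hereditary, $G''$ is strongly $d$-degenerate, so by induction $\HG(G'') \le (2d)^d < q$, yielding the desired contradiction.

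To build the induced strategy, the plan is to ``defuse'' $v,w_2,\ldots,w_k$, i.e.\ prescribe their colors as functions of colors visible in $G''$, in such a way that each of them is guaranteed to guess wrong. For a vertex $u \in V(G'')$, define $f''_u$ from $f_u$ by substituting for each $c(w_i)$ ($i \ge 2$) and for $c(v)$ (if $u = w_1$) the prescribed ``defusing'' value at that vertex. The substitution must be computable from $c(N_{G''}(u))$ alone: for $w_1$, we may let $c(v)$ depend on $c(N_G(w_1) \setminus \{v\})$, and for each ``outside'' vertex $u$ adjacent to some $w_i$ ($i \ge 2$), we may let $c(w_i)$ depend on $c(N_G(u) \setminus \{w_i\})$. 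The key counting claim is that, because $v$'s guess function has domain $[q]^k$ with $k \le d$, a pigeonhole on the $q^k \le q^d$ preimages of $f_v$, together with a similar count at each low-degree $w_i$, allows us to choose such defusing functions so that every extension of a $G''$-coloring to $G$ produces wrong guesses at $v$ and at each $w_i$ for $i \ge 2$. Thus, whenever some vertex guesses correctly under the extended coloring (as guaranteed by the winning strategy on $G$), that vertex must lie in $V(G'')$, so $(f''_u)_{u \in V(G'')}$ is winning on $G''$ with $q$ colors.

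The main obstacle will be carrying out the defusing in a way that is both \emph{locally computable} at each $G''$-vertex and \emph{simultaneously consistent} across all of $v,w_2,\ldots,w_k$. The delicate part is the high-degree neighbor $w_1$: its guess function depends on $c(v)$, and our prescription of $c(v)$ must avoid a large family of ``correct'' inputs to $f_v$ while remaining a function that $w_1$ can evaluate from its $G''$-neighborhood. I expect the bound $(2d)^d$ to arise by tracking the quantitative cost of this simultaneous choice: each of the $k \le d$ ``defusing constraints'' (one at $v$ and one at each $w_i$ with $i \ge 2$) can be realized with a polynomial loss in the number of admissible colors, and bookkeeping over these $d$ constraints, together with a factor of $2d$ absorbing the degree bound and the one allowed high-degree neighbor, should yield a color budget of exactly $(2d)^d + 1$ on $G''$.
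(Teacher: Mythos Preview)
Your plan has a genuine gap at the ``defusing'' step, and the difficulty is more serious than the bookkeeping you anticipate.

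First, the locality/consistency requirement on the substituted colours is in general unsatisfiable. A removed low-degree neighbour $w_i$ (with $i\ge 2$) can have several neighbours in $G''$, none of which need be adjacent or share a common neighbour. Each such $u\in N_G(w_i)\cap V(G'')$ must substitute a value for $c(w_i)$ computed from its own view $c(N_{G''}(u))$; for your extension argument to go through, all of these substituted values must coincide with the single colour the adversary eventually places on $w_i$. When the views are disjoint this forces the substituted value to be a constant independent of the colouring, and then you cannot guarantee $c(w_i)\ne f_{w_i}(c(N_G(w_i)))$ for every colouring of $N_G(w_i)$, since the range of $f_{w_i}$ may be all of $[q]$.

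Second, even in the minimal instance $k=1$ (so only $v$ is removed and only $w_1$ hallucinates $c(v)$), the scheme already fails. Your defusing value $c(v)$ is a function of $c(N_{G''}(w_1))$, which does \emph{not} include $c(w_1)$; yet ``$v$ guesses wrong'' reads $c(v)\ne f_v(c(w_1))$. Fixing $c(N_{G''}(w_1))$ and letting $c(w_1)$ range over $[q]$ keeps your chosen $c(v)$ constant while $f_v(c(w_1))$ may hit every colour (take $f_v$ a bijection). No pigeonhole on preimages of $f_v$ helps here, because the obstruction is information-theoretic: $w_1$ cannot condition on its own colour.

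The paper avoids both problems by a different mechanism. It introduces a \emph{multi-guess} variant of the game in which vertex $u$ may output a set of $g(u)$ guesses, and proves the stronger bound $\HG_g(G)\le (2d)^d$ for the specific weights $g(u)=(2d)^{d-d_G(u)}$ on low-degree vertices and $g(u)=1$ on high-degree ones. Only the single $d$-removable vertex $v$ is deleted; each surviving neighbour $w$ is compensated with a larger guess-budget $g'(w)$ on $G'=G-v$, and its new strategy outputs the $g'(w)$ colours most likely to lie in $f_w$'s output when $c(v)$ is uniformly random. A union bound then shows that, given any mean colouring of $G'$, the adversary (who sees the entire colouring, including $c(w_1)$) can pick $c(v)$ so that $v$ and all its neighbours simultaneously guess wrong. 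The quantitative trade-off between $g$ and $g'$ is encoded in the inequality $\frac{g(v)}{q}+\sum_{w\in N_G(v)}\frac{g(w)}{g'(w)+1}<1$, and it is this inequality, not a preimage count, that produces the bound $(2d)^d$.
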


Our proof of Theorem~\ref{thm:hattiness} is inductive and relies on an appropriate extension of the hat guessing game in which each player may guess more than one colour for its hat at once. A similar extension was previously analysed by Bosek et al.~\cite{bosek2021hat}. The idea is that any $d$-removable vertex has the property that removing it from the graph and increasing the number of guesses allowed by its low degree neighbours appropriately does not decrease the hat guessing number. By iterating this process, it follows that the hat guessing problem on any strongly $d$-degenerate graph is at most as hard as an isolated vertex with $(2d)^d$ guesses.

While the strong degeneracy notion might seem a bit unnatural at first, in Section~\ref{sec:boundeddeg}, we will establish the following exact characterisation of graph classes that have bounded strong degeneracy. As illustrated further below, this characterisation reveals many interesting sparse graph classes for which Theorem~\ref{thm:hattiness} yields bounds on the hat guessing numbers.

\begin{theorem}\label{thm:char}
Let $\mathcal{G}$ be a class of graphs closed under taking subgraphs. The following are equivalent. 
\begin{itemize}
    \item There exists $d \in \mathbb{N}$ such that every graph $G \in \mathcal{G}$ is strongly $d$-degenerate. 
    \item There exist $s \in \mathbb{N}$ and $k \in \mathbb{N}$ such that $K_{2,s} \notin \mathcal{G}$ and such that for every graph $H$ of minimum degree at least $k$, the $1$-subdivision\footnote{Recall that the $1$-subdivision of $H$ is the graph obtained by subdividing every edge of $H$ with a new vertex, transforming it into a path of length $2$ connecting the same endpoints.} of $H$ is not contained in $\mathcal{G}$. 
\end{itemize}
\end{theorem}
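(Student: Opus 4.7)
I will prove the two directions of the equivalence separately. For the forward direction, assuming every graph in $\mathcal{G}$ is strongly $d$-degenerate I take $s=k=d+1$ and verify directly that neither $K_{2,d+1}$ nor the $1$-subdivision of any graph with minimum degree at least $d+1$ has a $d$-removable vertex: in both cases every low-degree vertex has degree exactly $2$ with both of its neighbors of degree larger than $d$, while every other vertex has degree at least $d+1$; so neither graph belongs to $\mathcal{G}$.

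For the backward direction I fix $s,k$, assume $K_{2,s}\notin\mathcal{G}$ and that no $1$-subdivision of a graph with minimum degree at least $k$ lies in $\mathcal{G}$, and I seek $d=d(s,k)$ such that every subgraph $G'$ of every $G\in\mathcal{G}$ contains a $d$-removable vertex. I argue by contradiction: take $G'$ with no $d$-removable vertex, split $V(G')=V_L\cup V_H$ at the threshold $d$, and note that each $v\in V_L$ has at least two neighbors in $V_H$. I then dichotomize $V_H = V_H^A \cup V_H^B$ according to whether a vertex has more than $d/2$ neighbors in $V_H$ or in $V_L$; since every $u\in V_H$ satisfies $d_{G'}(u)>d$, at least one of these classes contains at least $|V_H|/2$ vertices.

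If $|V_H^A|\ge |V_H|/2$, then $G'[V_H]$ has more than $|V_H|d/8$ edges and hence contains a subgraph of minimum degree at least $d/8$. Being $K_{2,s}$-free, this subgraph contains a $1$-subdivision of a graph with minimum degree at least $k$ by a K\"uhn-Osthus-style lemma --- any $K_{2,s}$-free graph whose minimum degree exceeds a threshold $D_0(s,k)$ contains such a $1$-subdivision --- provided $d\ge 8 D_0(s,k)$. If $|V_H^B|\ge |V_H|/2$, I build an auxiliary multigraph $M$ on $V_H$ by assigning each $v\in V_L$ one arbitrary pair of its high-degree neighbors: $v$ uniquely labels one edge, and by $K_{2,s}$-freeness every pair of vertices has multiplicity at most $s-1$, so the simple graph $M_{\mathrm{simple}}$ has at least $|V_L|/(s-1)$ edges. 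Any subgraph of $M_{\mathrm{simple}}$ realizes automatically as a $1$-subdivision in $G'$ by picking a distinct $v$-witness per simple edge. When $|V_L|$ is large relative to $|V_H|$ this immediately yields a min-degree-$k$ subgraph of $M_{\mathrm{simple}}$ and hence the forbidden $1$-subdivision; in the remaining sub-case I pass to the full common-neighbor graph $F$ on $V_H$, whose average degree of at least $d/(4(s-1))$ follows from counting paths of length $2$ through $V_L$, and I realize a min-degree-$k$ subgraph of $F$ as a $1$-subdivision via a system of distinct representatives in the bipartite witness graph (which has left-degree at most $s-1$).

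The main obstacles I anticipate are the K\"uhn-Osthus-style lemma in Case A and the system-of-distinct-representatives step in the small-$|V_L|$ sub-case of Case B; both rest on careful counting in $K_{2,s}$-free graphs but should yield to standard techniques, giving a final bound $d(s,k)$ polynomial in $s$ and $k$.
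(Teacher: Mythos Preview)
Your forward direction is correct and coincides with the paper's: taking $s=k=d+1$ and checking that neither $K_{2,d+1}$ nor the $1$-subdivision of a graph of minimum degree $d+1$ contains a $d$-removable vertex.

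For the backward direction you take a genuinely different route from the paper. The paper proceeds in two modular steps: first it proves (Proposition~\ref{strongdeg:sufficient}) that every $K_{2,s}$-free graph $G$ is strongly $d$-degenerate for $d\le 2s\,\widetilde{\grad}_{1/2}(G)^2$, using only the graph $M_{\mathrm{simple}}$ (your notation) together with the degeneracy bound $e(G')\le \widetilde{\grad}_0(G)v(G')$; second, it bounds $\widetilde{\grad}_{1/2}(G)$ uniformly over $\mathcal G$ by splitting the edges of a densest shallow topological minor into the subdivided part (which is $(k-1)$-degenerate by hypothesis) and the non-subdivided part (whose density is bounded by Dvo\v{r}\'ak's lemma that any graph of average degree $\ge M(k+1)^2\log(k+1)$ contains a $1$-subdivision of a graph with chromatic number $>k$). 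Your Case~A / Case~B split is morally the same dichotomy, carried out directly on $G'$ rather than abstracted through top-grads.

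There is, however, a real gap in your Case~B, small-$|V_L|$ sub-case. The claim that the bipartite witness graph between $E(F')$ and $V_L$ ``has left-degree at most $s-1$'' is correct (each pair in $V_H$ has at most $s-1$ common neighbours in $V_L$), but this does \emph{not} yield a system of distinct representatives: Hall's condition can fail badly, since a single $v\in V_L$ with $D_v$ neighbours in $V_H$ may be the unique witness for as many as $\binom{D_v}{2}$ edges of $F'$, and $D_v$ can be as large as $d$. Nothing in your setup prevents a Hall-violating set $S\subseteq E(F')$ with $|N(S)|<|S|$, and passing to an arbitrary min-degree-$k$ subgraph of $F'$ does not help without further argument. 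This is not a routine SDR situation.

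The good news is that this sub-case has a clean fix that avoids $F$ and SDRs entirely. You have already established (via $M_{\mathrm{simple}}$ being $(k-1)$-degenerate) that $|V_L|\le (s-1)(k-1)|V_H|$ whenever you are in the small-$|V_L|$ regime. Combined with $\sum_{u\in V_H}d_{G'}(u)>d|V_H|$, this forces the average degree of $G'$ itself to exceed $d/((s-1)(k-1)+1)$. Now apply the Dvo\v{r}\'ak-type lemma directly to $G'$ (not to $G'[V_H]$ or to $F$): for $d$ large enough in terms of $s,k$, $G'$ contains a $1$-subdivision of a graph of minimum degree $\ge k$, contradiction. With this correction your argument goes through and yields a bound $d(s,k)$ of the same order as the paper's. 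Note also that your ``K\"uhn--Osthus-style lemma'' in Case~A does not actually require $K_{2,s}$-freeness; it is precisely the Dvo\v{r}\'ak lemma above, and you should cite it as such.
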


In order to prove Theorem~\ref{thm:char}, in Section~\ref{sec:boundeddeg}, we bound the strong degeneracy of a graph in terms of its so-called \emph{top-grads}, which are certain density-measures for graphs introduced by Ne\v{s}etril and Ossona de Mendez. We defer the details of these bounds to Section~\ref{sec:boundeddeg}. 

Let us now mention some applications of our results to concrete graph classes. 

\begin{itemize}
    \item It is immediate from the definition that every tree is strongly $1$-degenerate. Hence, a special case of Theorem~\ref{thm:hattiness} reproves the previously known result (cf.~\cite{alon2020hat,bosek2021hat,butler2009hat}) that every tree $T$ satifies $\text{HG}(T) \le 2$. 

    \item It remains an open problem (stated explicitly in~\cite{alon2021hat,bosek2021hat,bradshaw}), whether or not planar graphs allow for a constant bound on their hat guessing number. The best known lower bound for the hat guessing number of planar graphs is $14$, as proved in~\cite{kokhas2021hats}. Approaching this problem, Bradshaw showed in~\cite{bradshaw} that outerplanar graphs\footnote{A graph is called \emph{outerplanar} if it can be drawn in the plane without crossings, such that all vertices touch the unbounded outer region of the drawing.} have hat guessing number at most $2^{125000}$, and suspected that there exist outerplanar graphs with hat guessing number as large as $1000$. In this paper, we drastically improve Bradshaw's bound. 
    \begin{theorem}\label{thm:outplanar}
    If $G$ is an outerplanar graph, then $\HG(G) \le 40$. \end{theorem}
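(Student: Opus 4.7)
The plan is to derive Theorem~\ref{thm:outplanar} from Theorem~\ref{thm:hattiness} by analysing the strong degeneracy of outerplanar graphs. Outerplanar graphs are $K_{2,3}$-minor-free and sparse (at most $2n-3$ edges on $n$ vertices), so Theorem~\ref{thm:char} already ensures they form a class of bounded strong degeneracy. The real work lies in (i) pinning down the strong degeneracy by a small integer $d$, and (ii) squeezing the generic bound $(2d)^d$ down to the claimed $40$, since even $d=3$ gives $216$ and outerplanar graphs are not strongly $2$-degenerate (in the fan $F_n = K_4 - e$-style graphs, every degree-$2$ vertex has both neighbours of degree exceeding $2$).

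For the structural step, I would first reduce to $2$-connected outerplanar graphs via the block decomposition, since each block of an outerplanar graph is outerplanar and the hat guessing number is well-behaved when blocks are glued at single cut vertices. A $2$-connected outerplanar graph has its outer face bounded by a Hamiltonian cycle $C$ with non-crossing chords; when maximal, it is a triangulated polygon whose weak dual is a tree of maximum degree three, whose leaves correspond to ``ears'' (degree-$2$ vertices of $G$). The key structural claim is that every outerplanar subgraph contains a vertex of small degree at most one of whose neighbours has ``large'' degree. To prove it, I would exploit $K_{2,3}$-minor-freeness: a high-degree vertex $u$ can share at most two common neighbours with any other vertex, so the set of degree-$2$ vertices adjacent to $u$ is severely limited, which forces at least one ear (or near-ear) to have at most one heavy neighbour. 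Iterating gives strong $d$-degeneracy for small $d$.

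The hardest part will be compressing the numerical bound to $40$. The proof of Theorem~\ref{thm:hattiness} sketched in the excerpt attributes the $(2d)^d$ blow-up to a worst-case scenario in which $d$ consecutive $d$-removable vertices all share the same heavy neighbour, so that $d$ multiplicative factors of $2d$ compound at that neighbour. In the outerplanar setting, the non-crossing chord structure together with $K_{2,3}$-freeness should prevent long compounded chains: once a heavy vertex $u$ has absorbed the blow-up from its first few $d$-removable ``ears'', subsequent removals have heavy neighbours disjoint from $u$. I would reorganise the removal order accordingly (for instance, processing ears greedily around each heavy vertex before moving on) and re-run the inductive multi-guess reduction with this refined bookkeeping; exploiting the bounded ``depth'' of the outerplanar dual tree should then replace the factor $(2d)^d$ by a constant no larger than $40$.
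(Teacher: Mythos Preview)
Your proposal has the right instincts but a genuine gap in the step that gets you from a generic strong-degeneracy bound down to $40$. You correctly observe that plugging outerplanar graphs into Theorem~\ref{thm:hattiness} gives at best $(2d)^d \ge 216$, and then you sketch a plan to ``compress'' this by reorganising the removal order and invoking ``bounded depth of the outerplanar dual tree''. This last idea does not work as stated: the weak dual of a maximal outerplanar graph is an arbitrary tree of maximum degree three, so its depth is unbounded, and nothing about the removal order alone limits how many times the multi-guess blow-up compounds at a single heavy vertex.

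What the paper actually does is bypass Theorem~\ref{thm:hattiness} entirely and apply Proposition~\ref{guessnumberchange} directly with a hand-tuned guess function. The two missing ingredients are: (i) a sharper structural lemma (Lemma~\ref{lem:outer}) showing that every maximal outerplanar graph on more than three vertices has a degree-$2$ vertex $v$ at least one of whose two neighbours has degree at most $4$; and (ii) the choice $g(w)=4,2,1$ for vertices of degree $2,3,\ge 4$ respectively. With these in hand, removing $v$ and applying Proposition~\ref{guessnumberchange} with $q=41$ reduces the verification to the single inequality
\[
\frac{4}{41}+\frac{2}{5}+\frac{1}{2}<1,
\]
which is exactly where the constant $40$ comes from. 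Your proposal never identifies the degree-$\le 4$ neighbour fact (you only mention $K_{2,3}$-freeness, which is too weak to force it), nor the tailored guess function, so as written it cannot reach $40$.
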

    
    It is not hard to show that every outerplanar graph is strongly $4$-degenerate, which by Theorem~\ref{thm:hattiness} would imply a bound of $8^4=2^{12}$ on the hat guessing number. However, a slightly more careful analysis which is given in Section~\ref{sec:boundeddeg} yields the better bound stated in Theorem~\ref{thm:outplanar}.
    
    \item Another result which follows from Theorem~\ref{thm:char} is that there exists a constant $d$ such that every $C_4$-free planar graph is strongly $d$-degenerate. Indeed, since $K_{2,2}=C_4$ we can put $s=2$ in Theorem~\ref{thm:char}, and with $k=6$ we can see that since every graph $H$ of minimum degree at least $k$ is non-planar, also the $1$-subdivision of $H$ is non-planar. 
    
    In fact, by combining ideas from Section \ref{sec:boundeddeg} with double counting using Eulers formula, it can be shown that any $C_4$-free planar graph is strongly $7$-degenerate, which implies $\text{HG}(G) \le 14^7 \sim 10^8$ for every $C_4$-free planar graph $G$. This qualitatively strengthens a result of Bosek et al.~\cite{bosek2021hat} that planar graphs of girth at least $14$ have bounded hat guessing number.
    
    \item Bosek et al.~\cite{bosek2021hat} proved that for every fixed integer $\gamma$, graphs embeddable on a surface of Euler-genus $\gamma$ and of sufficiently large girth (in terms of $\gamma$) have hat guessing number at most $6$. 
    
    As a consequence of Theorem~\ref{thm:char}, $C_4$-free graphs embeddable on a surface of Euler-genus $\gamma$ have bounded strong degeneracy and hence bounded hat guessing numbers (in terms of $\gamma$). This somewhat complements Bosek et al.'s result, trading the constant bound of $6$ for the hat guessing number for a much weaker requirement on the graph (girth $5$ suffices). The reason why we can apply Theorem~\ref{thm:char} to $C_4$-free graphs with Euler-genus at most $\gamma$ is that graphs of bounded genus have bounded degeneracy, and hence, for every $\gamma$ there exists $k=k(\gamma) \in \mathbb{N}$ such that every graph $H$ of minimum degree at least $k$ cannot be embedded on a surface of Euler-genus $\gamma$, and in particular, the same is true for the $1$-subdivision of $H$.
    
    \item An open problem raised by Alon and Chizewer in~\cite{alon2021hat} is whether the hat guessing number of graphs can be bounded in terms of their \emph{Hadwiger number}. The Hadwiger number $h(G)$ of a graph $G$ is defined as the largest integer $t \ge 1$ such that $G$ contains a $K_t$-minor. Using Theorem~\ref{thm:char}, we can give a positive answer to this question for graphs without $K_{2,s}$-subgraphs, for any fixed $s \in \mathbb{N}$, for instance, for $C_4$-free graphs. More concretely, we obtain the following result in Section~\ref{sec:boundeddeg}.
    
    \begin{corollary}\label{hadwiger}
     Every $K_{2,s}$-free graph with no $K_t$-minor is strongly $d$-degenerate, where $d=d(s,t)=O(st\sqrt{\log t})$. In particular, $\text{HG}(G) \le 2^{O(st\sqrt{\log t}\log(st))}$ for every such graph. 
    \end{corollary}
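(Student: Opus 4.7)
The plan is to deduce the corollary from the quantitative bound on strong degeneracy in terms of top-grads promised for Section~\ref{sec:boundeddeg}, combined with the classical Kostochka--Thomason theorem stating that every graph with no $K_t$-minor has average degree (in fact, degeneracy) $O(t\sqrt{\log t})$. The point is that being $K_t$-minor-free is a minor-closed property, and therefore yields uniform control on all the relevant sparsity parameters that enter the characterisation from Theorem~\ref{thm:char}.

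First I would verify the two hypotheses of the quantitative version of Theorem~\ref{thm:char} with good constants. The $K_{2,s}$-freeness is given. For the subdivision condition, suppose the $1$-subdivision of some graph $H$ is a subgraph of $G$. Contracting, for each original edge of $H$, the subdivision vertex into one of its two branch-vertices shows that $H$ itself is a minor of $G$, hence $K_t$-minor-free. By Kostochka--Thomason, $H$ then has minimum degree at most $k = C\, t\sqrt{\log t}$ for an absolute constant $C$. So the threshold $k$ appearing in the characterisation can be taken of order $t\sqrt{\log t}$. More generally, every shallow minor of $G$ at any depth is $K_t$-minor-free, so all top-grads of $G$ are bounded by $O(t\sqrt{\log t})$; this is the form in which the bounds of Section~\ref{sec:boundeddeg} will actually be applied.

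Next, I would plug $s$ and $k=O(t\sqrt{\log t})$ into the quantitative bound from Section~\ref{sec:boundeddeg} which expresses the strong degeneracy of a $K_{2,s}$-free graph in terms of its top-grads. The expected form of that bound is linear in $s$ and linear in the top-grad (equivalently, linear in $k$), yielding
\[
 d \;=\; d(s,t) \;=\; O(s\cdot k) \;=\; O\!\left(st\sqrt{\log t}\right),
\]
which is exactly the strong degeneracy bound claimed in the corollary. Combining with Theorem~\ref{thm:hattiness} then gives
\[
 \HG(G) \;\le\; (2d)^d \;=\; \exp\!\bigl(d\log(2d)\bigr) \;=\; 2^{O(st\sqrt{\log t}\,\log(st))},
\]
as required.

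The main obstacle is really not in this corollary itself, which is a quick consequence once the machinery is in place; it lies in ensuring that the quantitative strong-degeneracy bound in Section~\ref{sec:boundeddeg} is genuinely \emph{linear} in both the $K_{2,s}$ parameter $s$ and the top-grad. Anything super-linear in either parameter would degrade the final exponent away from $O(st\sqrt{\log t}\log(st))$. So the work is all concentrated in verifying that the top-grad-based bound from Section~\ref{sec:boundeddeg} has the right shape; once that is established, the argument above is essentially a one-line deduction from Kostochka--Thomason.
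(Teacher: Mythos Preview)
Your proposal is correct and follows essentially the same route as the paper: bound the relevant density parameter via Kostochka--Thomason (using that being $K_t$-minor-free is minor-closed), then feed this into the quantitative strong-degeneracy bound from Section~\ref{sec:boundeddeg}, and finish with Theorem~\ref{thm:hattiness}.

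One point of precision is worth flagging. Section~\ref{sec:boundeddeg} contains \emph{two} quantitative bounds: Proposition~\ref{strongdeg:sufficient} bounds the strong degeneracy by roughly $2s\,\widetilde{\grad}_{1/2}(G)^2$, which is \emph{quadratic} in the top-grad, while Proposition~\ref{strongdeg:sufficient2} gives the \emph{linear} bound $d=\lfloor 2s\,\grad_1(G)\rfloor$ in terms of the grad at depth~$1$. The paper's proof of the corollary invokes the latter, and your anticipated ``linear in the top-grad'' bound is in fact linear in $\grad_1$, not in $\widetilde{\grad}_{1/2}$. This does not affect your argument, since you already observe that every shallow minor of $G$ (not just every shallow \emph{topological} minor) is $K_t$-minor-free, which is precisely what is needed to control $\grad_1(G)$; but you should be aware that the top-grad version of the bound would only yield $d=O(st^2\log t)$, and it is the grad version that delivers the stated $O(st\sqrt{\log t})$.
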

    
    We remark that it was observed in~\cite{alon2021hat} that there exist graphs with no $K_t$-minor and whose hat guessing number is equal to $2^{2^{t-3}}$.
    
    \item Qualitatively generalising Corollary~\ref{hadwiger}, in Section~\ref{sec:boundeddeg} we also prove a similar result for graphs with no \emph{topological} $K_t$-minor, i.e., graphs which do not contain a subdivision of $K_t$ as a subgraph. \begin{corollary}\label{topological}
     Every $K_{2,s}$-free graph containing no $K_t$-subdivision is strongly $d$-degenerate, where $d=d(s,t)=O(st^4)$. In particular, $\text{HG}(G) \le 2^{O(st^4\log(st))}$ for every such graph. 
    \end{corollary}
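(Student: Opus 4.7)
The idea is to apply Theorem~\ref{thm:char} combined with the quantitative strong-degeneracy bounds from Section~\ref{sec:boundeddeg}. Let $\mathcal{G}$ denote the class of $K_{2,s}$-free graphs with no $K_t$-subdivision. Both defining properties are preserved under taking subgraphs, so $\mathcal{G}$ is subgraph-closed and Theorem~\ref{thm:char} applies.

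To verify the two hypotheses of Theorem~\ref{thm:char}, note first that $K_{2,s} \notin \mathcal{G}$ by definition. For the second hypothesis, set $k = c t^2$, with $c$ the absolute constant from the Bollob\'as--Thomason subdivision theorem, which guarantees a $K_t$-subdivision in every graph of minimum degree at least $k$. If $\delta(H) \ge k$, then $H$ itself already contains a $K_t$-subdivision, and taking the $1$-subdivision of $H$ only lengthens each of the corresponding branch-paths by a factor of two; in particular, the $1$-subdivision of $H$ still contains a $K_t$-subdivision. Hence the $1$-subdivision of any such $H$ does not lie in $\mathcal{G}$, as required. This already gives qualitative boundedness of the strong degeneracy over $\mathcal{G}$.

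For the explicit bound $d = O(st^4)$, I would invoke the quantitative bound from Section~\ref{sec:boundeddeg} that controls the strong degeneracy of a $K_{2,s}$-free graph in terms of a suitable topological top-grad $\widetilde{\nabla}_r$. The key observation is that for any $G \in \mathcal{G}$, every $r$-shallow topological minor $H$ of $G$ is itself $K_t$-subdivision-free: a $K_t$-subdivision inside $H$ would, after replacing each edge of $H$ by the corresponding length-$\le (2r+1)$ path in $G$, yield a $K_t$-subdivision inside $G$. Hence by Bollob\'as--Thomason, $H$ has average degree $O(t^2)$, so $\widetilde{\nabla}_r(G) = O(t^2)$ uniformly in $r$. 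Plugging this into the quantitative bound of Section~\ref{sec:boundeddeg} then gives $d = O(st^4)$. Finally, Theorem~\ref{thm:hattiness} yields $\HG(G) \le (2d)^d = 2^{O(st^4\log(st))}$, completing the proof.

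The main obstacle is the final quantitative step: Theorem~\ref{thm:char} alone yields only a qualitative bound on strong degeneracy, so the precise exponent of $t$ comes entirely from the sharpness of the top-grad-to-strong-degeneracy bound developed in Section~\ref{sec:boundeddeg}. In particular, the jump from $O(st\sqrt{\log t})$ in the minor case (Corollary~\ref{hadwiger}) to $O(st^4)$ here reflects both the weaker degeneracy bound available for $K_t$-subdivision-free graphs ($O(t^2)$ versus $O(t\sqrt{\log t})$) and, plausibly, a quadratic dependence on $\widetilde{\nabla}_r$ in the topological setting that is not needed for minors. A secondary subtlety is choosing the shallow-minor depth $r$ so that the uniform estimate $\widetilde{\nabla}_r(G) = O(t^2)$ matches the depth required by the strong-degeneracy bound of Section~\ref{sec:boundeddeg}.
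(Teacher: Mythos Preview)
Your proposal is correct and essentially matches the paper's proof: bound $\widetilde{\nabla}_{1/2}(G)$ by $O(t^2)$ via Bollob\'as--Thomason/Koml\'os--Szemer\'edi (shallow topological minors of $G$ remain $K_t$-subdivision-free), then apply Proposition~\ref{strongdeg:sufficient} to get $d \le 2s\,\widetilde{\nabla}_{1/2}(G)^2 = O(st^4)$, and finish with Theorem~\ref{thm:hattiness}. The detour through Theorem~\ref{thm:char} is unnecessary, and your ``secondary subtlety'' about the depth is resolved by Proposition~\ref{strongdeg:sufficient}, which uses $r=\tfrac12$ and indeed has exactly the quadratic dependence on the top-grad you anticipated.
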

    \item The most general application, which (at least qualitatively) also subsumes all the previously listed results, addresses graph classes of \emph{bounded expansion}. These form a central concept in the \emph{sparsity theory} of graph classes developed by Ne\v{s}etril and Ossona de Mendez. Many of the cornerstone results on this topic are summarised in the book~\cite{sparsity} by Ne\v{s}etril and Ossona de Mendez. We give detailed definitions in Section~\ref{sec:boundeddeg}, here let it only be said that most natural graph classes of bounded degeneracy fall under the umbrella of bounded expansion, including (but not limited to) graphs of bounded maximum degree, graphs of bounded genus, graphs containing no $K_t$-minor, graphs containing no $K_t$-subdivision, string graphs with a forbidden complete bipartite subgraph, and sparse random graphs. As a consequence of Theorems~\ref{thm:hattiness} and~\ref{thm:char} we can prove that graphs belonging to any one of these classes and which do not contain a pair of vertices sharing $s$ neighbours for a fixed constant $s \ge 1$, have bounded hat guessing number. 
\begin{corollary}\label{boundedexpansion}
Let $\mathcal{G}$ be a class of graphs with bounded expansion, and let $s \in \mathbb{N}$. Then there exists an integer $d=d(\mathcal{G},s)$ such that every $K_{2,s}$-free graph $G \in \mathcal{G}$ is strongly $d$-degenerate and hence satisfies $\text{HG}(G) \le (2d)^d$.
\end{corollary}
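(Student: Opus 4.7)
The plan is to reduce the statement to the characterisation provided by Theorem~\ref{thm:char} and then close via Theorem~\ref{thm:hattiness}. First, I would replace $\mathcal{G}$ by its subgraph-closure $\overline{\mathcal{G}}$: since bounded expansion is defined through a uniform upper bound on the edge densities of $r$-shallow (topological) minors, it is manifestly preserved under taking subgraphs, so $\overline{\mathcal{G}}$ still has bounded expansion. Inside $\overline{\mathcal{G}}$, let $\mathcal{G}_s$ denote the subclass of $K_{2,s}$-free graphs; it is subgraph-closed and by construction excludes $K_{2,s}$. Every $K_{2,s}$-free $G \in \mathcal{G}$ lies in $\mathcal{G}_s$, so it suffices to prove bounded strong degeneracy for $\mathcal{G}_s$.

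The key step is to verify the $1$-subdivision condition of Theorem~\ref{thm:char} for $\mathcal{G}_s$. For this, I would use the elementary observation that if the $1$-subdivision of a graph $H$ is a subgraph of some $G \in \overline{\mathcal{G}}$, then $H$ is a $1$-shallow topological minor of $G$ (every branch path has length exactly $2$). The bounded expansion property of $\overline{\mathcal{G}}$ then yields a constant $c = c(\mathcal{G})$ bounding the edge density of any $1$-shallow topological minor of any member of $\overline{\mathcal{G}}$, so such an $H$ has average, and a fortiori minimum, degree at most $2c$. Choosing any integer $k > 2c$, no graph of minimum degree at least $k$ can have its $1$-subdivision realised as a subgraph of a graph in $\overline{\mathcal{G}}$, and in particular not in $\mathcal{G}_s$.

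With both hypotheses of Theorem~\ref{thm:char} verified for the subgraph-closed class $\mathcal{G}_s$, that theorem supplies an integer $d = d(\mathcal{G},s)$ such that every member of $\mathcal{G}_s$ is strongly $d$-degenerate, and Theorem~\ref{thm:hattiness} then immediately gives $\text{HG}(G) \le (2d)^d$ for every $K_{2,s}$-free $G \in \mathcal{G}$. The main place where care is required is matching the precise formulation of ``bounded expansion'' used in Section~\ref{sec:boundeddeg} (shallow minors versus shallow topological minors, and the grad versus top-grad density measures) with the density bound applied to $1$-subdivisions; this equivalence is classical in Ne\v{s}etril--Ossona de Mendez sparsity theory, but the proof should invoke it explicitly so as to remain self-contained.
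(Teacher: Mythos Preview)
Your argument is correct. You verify the two hypotheses of Theorem~\ref{thm:char} for the subgraph-closed class $\mathcal{G}_s$ and then invoke that characterisation, followed by Theorem~\ref{thm:hattiness}. The only cosmetic point is that a $1$-subdivision makes $H$ a shallow topological minor at depth $\tfrac{1}{2}$ in the paper's notation (branch paths have length $2 \le 2a+1$ for $a=\tfrac{1}{2}$), not depth~$1$; this does not affect the argument since bounded expansion controls all top-grads uniformly.

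The paper takes a more direct route: rather than going through the qualitative characterisation Theorem~\ref{thm:char}, it applies Proposition~\ref{strongdeg:sufficient2} immediately. Bounded expansion gives a constant $C_1$ with $\nabla_1(G) \le C_1$ for all $G \in \mathcal{G}$, and Proposition~\ref{strongdeg:sufficient2} then yields that every $K_{2,s}$-free $G \in \mathcal{G}$ is strongly $d$-degenerate for $d = \lfloor 2s\nabla_1(G)\rfloor \le \lfloor 2sC_1\rfloor$. This is shorter, avoids the passage to the subgraph-closure, and produces an explicit value $d = \lfloor 2sC_1\rfloor$. Your route, by contrast, relies on the black-box existence statement of Theorem~\ref{thm:char} (whose own proof, incidentally, unwinds to Proposition~\ref{strongdeg:sufficient} and gives $d \le 2s\,\widetilde{\nabla}_{1/2}(G)^2$). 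Both are valid; the paper's approach is simply one layer less indirect.
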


\item Alon and Chizewer~\cite{alon2021hat} initiated the study of hat guessing on the Erd\H{o}s-Renyi-random graphs $G(n,p)$, and showed that with high probability\footnote{or w.h.p.\ for short, with probability tending to one as $n\to \infty$.} $\HG(G(n,1/2))\ge n^{1-o(1)}$. While their result addresses very dense random graphs, as an application of the previously mentioned theory of classes with bounded expansion we show in Section~\ref{sec:boundeddeg} that on the opposite side of the spectrum, very sparse Erd\H{o}s-Renyi-random graphs with a constant average degree have a constant hat guessing number, providing further evidence towards a positive answer for Problem~\ref{problems}. 
\begin{corollary}\label{random}
There exists a function $f:\mathbb{R}_+ \rightarrow \mathbb{N}$ such that for every fixed $C>0$, it holds w.h.p.\ as $n \rightarrow \infty$ that 
\[\HG\left(G\left(n,\frac{C}{n}\right)\right) \le f(C).\]
\end{corollary}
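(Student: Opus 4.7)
The plan is to combine Corollary~\ref{boundedexpansion} with two probabilistic facts about $G(n,C/n)$: that it is w.h.p.\ $K_{2,3}$-free, and that it w.h.p.\ belongs to some graph class of bounded expansion depending only on $C$. Granting these, the corollary follows immediately.

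For $K_{2,3}$-freeness, a straightforward first moment computation suffices. The expected number of copies of $K_{2,3}$ in $G(n,C/n)$ is at most
\[
\binom{n}{2}\binom{n-2}{3}\left(\frac{C}{n}\right)^{\!6} = O\!\left(\frac{C^6}{n}\right),
\]
which tends to $0$ as $n\to\infty$. By Markov's inequality, $G(n,C/n)$ is therefore w.h.p.\ $K_{2,3}$-free.

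The second ingredient is a classical result from the sparsity theory of Ne\v{s}etril and Ossona de Mendez~\cite{sparsity}: for every constant $C>0$, there exists a class $\mathcal{G}_C$ of graphs of bounded expansion (with the expansion function depending only on $C$) such that w.h.p.\ $G(n,C/n)\in\mathcal{G}_C$. This is precisely the fact alluded to when sparse random graphs were listed among the examples of bounded-expansion classes in the discussion preceding Corollary~\ref{boundedexpansion}.

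Combining the two observations, w.h.p.\ $G(n,C/n)$ is a $K_{2,3}$-free member of $\mathcal{G}_C$. Applying Corollary~\ref{boundedexpansion} to $\mathcal{G}_C$ with $s=3$ yields an integer $d=d(C)$ such that every $K_{2,3}$-free graph in $\mathcal{G}_C$ is strongly $d$-degenerate. Theorem~\ref{thm:hattiness} then gives $\HG(G(n,C/n)) \le (2d)^d$ w.h.p., so we may take $f(C):=(2d(C))^{d(C)}$. The only non-routine step in this argument is invoking the bounded-expansion property of sparse random graphs; everything else is a direct appeal to the results established earlier in the paper.
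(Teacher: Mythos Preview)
Your proof is correct and follows essentially the same approach as the paper: a first-moment argument for $K_{2,3}$-freeness, the Ne\v{s}et\v{r}il--Ossona de Mendez result that $G(n,C/n)$ w.h.p.\ lies in a bounded-expansion class depending only on $C$, and then Corollary~\ref{boundedexpansion}. The only differences are cosmetic (you count copies of $K_{2,3}$ via $\binom{n}{2}\binom{n-2}{3}$ rather than the cruder $n^5$, and you make the choice $f(C)=(2d(C))^{d(C)}$ explicit).
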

\end{itemize}

\paragraph{Organization.} The rest of this paper is structured as follows. In Section~\ref{sec:hatnumber} we present a generalized variant of the hat guessing game, which is then used in the proof of Theorem~\ref{thm:hattiness}. We conclude the section by proving the upper bound on the hat guessing number of outerplanar graphs in Theorem~\ref{thm:outplanar}.

In Section~\ref{sec:boundeddeg} we give upper bounds on the strong degeneracy of a graph in terms of its top-grads. Using these bounds, we prove Theorem~\ref{thm:char}.  Finally, we deduce the consequences of these results for the sparse graph classes mentioned above, including the proofs of Corollaries~\ref{hadwiger},~\ref{topological},~\ref{boundedexpansion} and~\ref{random}.

\section{Bounding the hat guessing number in terms of strong degeneracy}\label{sec:hatnumber}
In order to analyse the hat guessing number in terms of the strong degeneracy of a graph $G$, we propose a slight variant of the original game. A variant similar to this has also been considered in~\cite{bosek2021hat}.  
Let $G$ be a graph, and let $g:V(G) \rightarrow \mathbb{N}$ be an assignment of positive integers to its vertices. The multiple-guess hats-game with respect to $g$ and with $q$ colours is played just as the ordinary hat guessing game, only that every vertex $v$ is allowed to guess up to $g(v)$ colours at a time, and the players win the game if for every adversarial colouring there is a vertex $v$ for which its assigned colour is contained in the list of $\le g(v)$ colours that it has guessed. Formally, the strategy of the players consists of functions $(f_v)_{v \in V(G)}$ where for every $v \in V(G)$ $f_v$ is a mapping $f:[q]^{N_G(v)} \rightarrow \binom{[q]}{\le g(v)}$ assigning to every possible $q$-colouring of its neighbourhood a set of at most $g(v)$ colours in $[q]$.

Define $\text{HG}_g(G)$ as the largest number $q$ for which the players have a winning strategy when playing the game described above (with respect to $g$) if $q$ colours are used in total. 
Formally, we say that a strategy $(f_v)_{v \in V(G)}$ is \emph{winning} if for every colour-assignment $c:V(G) \rightarrow [q]$ there exists $v \in V(G)$ such that $c(v) \in f_v((c(w))_{w \in N_G(v)})$. 
Trivially, $\text{HG}_g(G)=\text{HG}(G)$ if $g \equiv 1$, and $\text{HG}_{g_1}(G) \le \text{HG}_{g_2}(G)$ for every pair $g_1,g_2$ of assignments such that $g_1 \le g_2$. 

We prove Theorem~\ref{thm:hattiness} by means of the following more general statement. 
\begin{proposition}\label{guessnumberchange}
Let $G$ be a graph with an assignment $g:V(G) \rightarrow \mathbb{N}$, and let $v \in V(G)$. Denote $G':=G-v$ and let $g':V(G')\rightarrow \mathbb{N}$ be such that $g'(x)=g(x)$ for every $x \in V(G)\setminus(\{v\} \cup N_G(v))$. If for some integer $q>HG_{g'}(G')$ we have
$$\frac{g(v)}{q}+\sum_{w \in N_G(v)}{\frac{g(w)}{g'(w)+1}}<1,$$ then $q>\text{HG}_{g}(G)$. 
\end{proposition}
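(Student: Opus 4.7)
The plan is to argue by contrapositive. Assume $q \le \HG_g(G)$, so there is a winning strategy $(f_u)_{u \in V(G)}$ on $G$ with $q$ colours and budget $g$; I will construct a winning strategy $(f'_u)_{u \in V(G')}$ on $G'$ with the same $q$ and budget $g'$, which contradicts $q > \HG_{g'}(G')$.

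For $u \in V(G')$ not adjacent to $v$, I simply set $f'_u := f_u$; since $N_{G'}(u) = N_G(u)$ and $g'(u) = g(u)$ for such $u$, this is well-defined and respects the budget. The crucial case is $w \in N_G(v)$, where I use a ``coverage threshold'' rule: for $\sigma \in [q]^{N_G(w) \setminus \{v\}}$, define
\[
f'_w(\sigma) \;=\; \Big\{ c \in [q] \;:\; \big|\{z \in [q] : c \in f_w(\sigma, z)\}\big| \;\ge\; \tau_w \Big\}, \qquad \tau_w := \left\lfloor \frac{q\,g(w)}{g'(w)+1} \right\rfloor + 1.
\]
Informally, $c$ is among $w$'s guesses under $f'_w$ iff the original strategy $f_w$ would have guessed $c$ for ``sufficiently many'' hypothetical colours of $v$'s hat. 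The budget check $|f'_w(\sigma)| \le g'(w)$ is a one-line double counting: $\sum_{c\in[q]} |\{z : c \in f_w(\sigma, z)\}| = \sum_{z\in[q]} |f_w(\sigma, z)| \le q\, g(w)$, so the number of $c$ with coverage at least $\tau_w$ is at most $q\,g(w)/\tau_w < g'(w) + 1$.

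The winning check proceeds by contradiction. Suppose some $c': V(G') \to [q]$ is not won by $f'$; write $\sigma_u = c'|_{N_G(u) \setminus \{v\}}$. Then (i) no $u \notin N_G[v]$ guesses correctly in $f_u$, and (ii) for every $w \in N_G(v)$, the set $B_w := \{z \in [q] : c'(w) \in f_w(\sigma_w, z)\}$ has $|B_w| < \tau_w$. Because $(f_u)$ is winning on $G$, for every extension $c := c' \cup \{v \mapsto Z\}$ with $Z \in [q]$ some vertex guesses correctly in $f$, and by~(i) this winner must lie in $\{v\} \cup N_G(v)$. Setting $A := f_v(c'|_{N_G(v)})$, we thus have $[q] = A \cup \bigcup_{w \in N_G(v)} B_w$, whence
\[
q \;\le\; |A| + \sum_{w \in N_G(v)} |B_w| \;\le\; g(v) + \sum_{w \in N_G(v)} \frac{q\, g(w)}{g'(w) + 1}.
\]
Dividing by $q$ directly contradicts the assumed inequality.

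The only non-routine step is finding the right definition of $f'_w$: a ``majority/threshold'' filter over the extra input slot previously occupied by $v$. In particular, the denominator $g'(w)+1$ (rather than $g'(w)$) in the hypothesis is exactly what is produced by the pigeonhole bound $|B_w| \le \tau_w - 1$ together with the budget constraint $|f'_w(\sigma)| \le g'(w)$; any looser denominator would make the two estimates fail to compose. Once the threshold strategy is identified, both verifications collapse into a single double counting and an averaging argument.
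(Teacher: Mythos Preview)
Your proof is correct and follows essentially the same route as the paper's. The only cosmetic difference is that the paper defines $f'_w$ by selecting the $g'(w)$ colours with the largest ``hit probabilities'' $p_w^i$ (equivalently, the largest coverage counts), whereas you take all colours whose coverage exceeds the threshold $\tau_w$; these are dual formulations of the same averaging/pigeonhole argument, and the subsequent union-bound over $\{v\}\cup N_G(v)$ is identical.
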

\begin{proof}
Let $q>HG_{g'}(G')$ be such that
\begin{equation}\label{eq0}\frac{g(v)}{q}+\sum_{w \in N_G(v)}{\frac{g(w)}{g'(w)+1}}<1.\end{equation}
Note that $\text{HG}_{g'}(G')<q$ also implies $g'(w)<q$ for all $w \in V(G)\setminus \{v\}$. 
In order to prove the claim that $\text{HG}_g(G)<q$, we have to show that if the hat guessing game with respect to $g$ and with $q$ colours is played on $G$, then the players do not have a winning strategy. 
Towards a contradiction, suppose that there exists a winning strategy $(f_w)_{w \in V(G)}$ for the multiple-guess hats-game on the graph $G$, played with colour-set $[q]$ and with respect to the multiplicities given by $g$. 

Let us define a modified set of guessing functions on the reduced graph $G'$, which we denote by $(f_w')_{w \in V(G')}$, as described in the following. For every $w \in V(G')=V(G)\setminus\{v\}$, the function $f_w'$ maps from $[q]^{N_{G'}(w)}$ to $\binom{[q]}{\le g'(w)}$. 

Firstly, for every $w \in V(G)\setminus(\{v\} \cup N_G(v))$, we let $f_w':=f_w$. 

Secondly, consider a fixed vertex $w \in N_G(v)$ and a given colouring $(c_x)_{x \in N_{G'}(w)}$ of its neighbourhood in $G'$. In the following we give a definition of $f_w'((c_x)_{x \in N_{G}(w)\setminus\{v\}})$.
To do so, consider a discrete probability space in which we colour the vertex $v$ with a colour $c \in [q]$, chosen uniformly at random. For every colour $i \in [q]$, let $A_w^i$ denote the event that ``$f_w((c_x)_{x \in N_{G}(w)\setminus\{v\}},c) \ni i$'', and let $p_w^i:=\mathbb{P}(A_w^i)$ for every $i \in [q]$. Finally, consider a linear order on $[q]$ extending the partial ordering given by the values $p_w^i, i \in [q]$, and let $I \subseteq [q]$ be the set of $g'(w)$ largest colours with respect to this partial order. We define \[f_w'((c_x)_{x \in N_{G}(w)\setminus\{v\}}):=I \in \binom{[q]}{\le g'(w)}.\] Note that this is a well-defined strategy, since all the probabilities $p_w^1,\ldots,p_w^q$ can be calculated and hence also the choice of $I$ can be made, solely dependent on $f_w$ and the colouring $(c_x)_{x \in N_{G'}(w)}$ of the neighbourhood of $w$ in $G'$.

Further note that for every $w \in N_G(v)$ and every colouring $(c_x)_{x \in N_{G'}(w)}$ of its neighbourhood in $G'$, by definition of $I$ we have that for every $j \in [q]\setminus f_w'((c_x)_{x \in N_{G'}(w)})$, it holds that
\begin{equation}\label{eq1} 
\begin{split}
\mathbb{P}(j \in f_w((c_x)_{x \in N_{G}(w)\setminus\{v\}},c))&=p_w^j \le \frac{1}{|I|+1}\sum_{i \in I \cup \{j\}}{p_w^i} \le \frac{1}{|I|+1}\sum_{i \in [q]}{p_w^i} \\ &=\frac{1}{g'(w)+1}\mathbb{E}(|f_w((c_x)_{x \in N_{G}(w)\setminus\{v\}},c)|) \le \frac{g(w)}{g'(w)+1}.
\end{split}
\end{equation}

We are now ready to lead our initial assumption towards a contradiction, by constructing a colouring of $G$ for which the strategy $(f_w)_{w \in V(G)}$ does not win (that is, no vertex guesses a set of colours containing its actual colour). In the following, we call such a colouring \emph{mean}.
Since $q>\text{HG}_{g'}(G')$ by our inductive assumption, there exists a mean colouring of $G'$ with respect to the strategies $(f_w')_{w \in V(G')}$, that is, a mapping $c:V(G') \rightarrow [q]$ such that $c(w) \notin f_{w}'((c(x))_{x \in N_{G'}(w)})$, for \emph{every} $w \in V(G')$. 

\paragraph{Claim.} There is $c \in [q]$ such that $c \notin f_v((c(w))_{w \in N_G(v)})$ and $c(w) \notin f_w((c(x))_{x \in N_G(w)\setminus \{v\}},c)$, for every $w \in N_G(v)$. 
\begin{proof}[Subproof]
The claim is trivially true if $d_G(v)=0$, since we then may simply pick $c$ as a colour which is not contained in the guessing set of $v$ determined by $f_v$ (note that the latter set has size $g(v)<q$, since $\frac{g(v)}{q}<1$ by \eqref{eq0}).

Moving on, suppose that $d_G(v) \ge 1$.
Let us choose a colour $c \in [q]$ uniformly at random, and show that it has the required properties with positive probability. Let $E$ denote the event that one of the two properties stated in the claim does not hold. $E$ is covered by the union of the events ``$c \in f_v((c(w))_{w \in N_G(v)})$'' and ``$c(w) \in f_w((c(x))_{x \in N_G(w) \setminus \{v\}},c)$'' for $w \in N_G(v)$. 

Clearly, we have \[\mathbb{P}(c \in f_v((c(w))_{w \in N_G(v)}))=\frac{|f_v((c(w))_{w \in N_G(v)})|}{q} \le \frac{g(v)}{q}.\] Furthermore, for every $w \in N_G(v)$, we have that $j:=c(w) \notin f_{w}'((c(x))_{x \in N_{G'}(w)})$. By inequality~\ref{eq1}, we have
\[\mathbb{P}(c(w) \in f_w((c(x))_{x \in N_G(w) \setminus \{v\}},c)) \le \frac{g(w)}{g'(w)+1},\] for every $w \in N_G(v)$. Applying a union bound, and making use of inequality~\ref{eq0}, we have
$$\mathbb{P}(E) \le \frac{g(v)}{q}+\sum_{w \in N_G(v)}{\frac{g(w)}{g'(w)+1}}<1.$$ 
Consequently, we have $\mathbb{P}(\bar{E})>0$, proving the existence of $c$ with the desired properties.
\end{proof}
We now consider the $[q]$-colouring of $G$ obtained by extending the colouring $c(\cdot)$ of $G'$ with the colour $c$ from the above claim at $v$. We claim that this is a mean colouring of $G$ with respect to $(f_w)_{w \in V(G)}$ (which then concludes the proof). Indeed, for every $w \in V(G)\setminus (\{v\} \cup N_G(v))$, we have $f_w'=f_w$ by definition, and hence (since $c(\cdot)$ is a mean colouring of $G'$) $c(w) \notin f_w'((c(x))_{x \in N_{G'}(w)})=f_w((c(x))_{x \in N_{G}(w)})$. For $w \in N_G(v)$ and the vertex $v$, the properties of $c$ guaranteed by the claim state exactly that their guessed colour-set does not contain their actual colour. Hence, this colouring is indeed mean, concluding the proof by contradiction.
\end{proof}
Using Proposition~\ref{guessnumberchange}, we can now rather easily derive Theorem~\ref{thm:hattiness}.
\begin{proof}[Proof of Theorem~\ref{thm:hattiness}]

We prove the stronger claim that for every strongly $d$-degenerate graph $G$, we have $\text{HG}_{g}(G) \le (2d)^d$, where $g:V(G) \rightarrow \mathbb{N}$ is defined by
$$g(v):=\begin{cases} 1, & \text{if }d_G(v)>d, \cr (2d)^{d-d_G(v)}, & \text{if }d_G(v) \le d \end{cases}.$$
We prove this stronger claim by induction on the number of vertices of $G$. If $v(G)=1$, denote $V(G)=\{v\}$ and note that $g(v)=(2d)^d$. The fact that $\text{HG}_g(G) \le (2d)^d$ now follows trivially (if $q>(2d)^d$, the guess of $v$ is a set of $\le (2d)^d$ colours which does not change when changing the colour assigned to $v$, hence the adversary may just put a colour on $v$ which does not appear in its guess). 

For the induction step, suppose that $v(G) \ge 2$ and the claim holds for every strongly $d$-degenerate graph on fewer than $v(G)$ vertices.
Since $G$ is strongly $d$-degenerate, it contains a $d$-removable vertex $v$. Unless $d_G(v)=0$, we denote by $u \in N_G(v)$ a vertex such that $d_G(w) \le d$ for every $w \in N_G(v) \setminus \{u\}$. 

Put $G':=G-v$ and note that $G'$ as a subgraph of $G$ is still strongly $d$-degenerate. Hence, we may apply the induction hypothesis to $G'$ and we find that $\text{HG}_{g'}(G) \le (2d)^d$, where $g':V(G)\setminus\{v\} \rightarrow \mathbb{N}$ is defined by 
$$g'(w):=\begin{cases} 1, & \text{if }d_{G'}(w)>d, \cr (2d)^{d-d_{G'}(w)}, & \text{if }d_{G'}(w) \le d \end{cases}.$$
Note that $g'(w)=g(w)$ for every $w \in V(G)\setminus (\{v\} \cup N_G(v))$ and $g'(w)=2d\cdot g(w)$ for every $w \in N_G(v)\setminus\{u\}$.

In order to prove the inductive claim that $\text{HG}_g(G) \le (2d)^d$, we invoke Proposition~\ref{guessnumberchange} and show that for $q:=(2d)^d+1>\text{HG}_{g'}(G')$ the inequality
\begin{equation}\label{eq2}\frac{g(v)}{q}+\sum_{w \in N_G(v)}{\frac{g(w)}{g'(w)+1}}<1\end{equation} is satisfied, which then yields that $\text{HG}_g(G) \le q-1 = (2d)^d$, as required. 

Since $d_G(v) \le d$ ($v$ is a $d$-removable vertex of $G$), we have $g(v)=(2d)^{d-d_G(v)}$ by definition. This directly implies that \eqref{eq2} is satisfied if $d_G(v)=0$, hence, in the following assume that $d_G(v) \ge 1$ (and hence that $u$ exists). 
For every $w \in N_G(v)\setminus\{u\}$, from the above we have \[\frac{g(w)}{g'(w)+1}<\frac{g(w)}{g'(w)}=\frac{g(w)}{2dg(w)}=\frac{1}{2d}.\] Finally, observe that by definition of $g$ and $g'$ we either have $d_{G}(u) \le d$ and hence $g'(u)=2d\cdot g(u)$, or $d_G(u)>d$ and thus $g(u)=g'(u)=1$. In both cases, we immediately obtain $\frac{g(u)}{g'(u)+1} \le \frac{1}{2}$. Hence, we may upper-bound the left hand side of \eqref{eq2} by

$$\frac{(2d)^{d-d_G(v)}}{(2d)^d+1}+\frac{1}{2}+(d_G(v)-1)\frac{1}{2d}$$
The real-valued function $x \rightarrow \frac{(2d)^{d-x}}{(2d)^d+1}+\frac{1}{2}+(x-1)\frac{1}{2d}$ is easily seen to be convex, and hence achieves its maximum on $[1,d]$ at a value in $\{1,d\}$. We conclude that
$$\frac{g(v)}{q}+\sum_{w \in N_G(v)}{\frac{g(w)}{g'(w)+1}} \le \max\left\{\frac{(2d)^{d-1}}{(2d)^d+1}+\frac{1}{2},1+\frac{1}{(2d)^d+1}-\frac{1}{2d}\right\}<1,$$ as required. This concludes the proof of the theorem.
\end{proof}

We conclude this section by establishing Theorem~\ref{thm:outplanar}. We prepare the proof with an elementary fact about so-called \emph{maximal outerplanar graphs}. A graph $G$ on at least $3$ vertices is called \emph{maximal outerplanar} if it is outerplanar and the addition of any edge between two non-adjacent vertices in $G$ results in a graph which is not outerplanar. 

\begin{lemma}\label{lem:outer}
Let $G$ be a maximal outerplanar graph on more than $3$ vertices. Then there exists a vertex $v \in V(G)$ such that $d_G(v)=2$, $G-v$ is a maximal outerplanar graph, and such that at least one neighbour of $v$ has degree at most $4$ in $G$.
\end{lemma}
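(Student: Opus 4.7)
My plan is to exploit the structure of the weak dual of the triangulation. For a maximal outerplanar graph $G$ on $n \ge 4$ vertices, let $T$ be the tree whose nodes are the bounded (triangular) faces of $G$, with two nodes joined when the corresponding triangles share an edge. Then $T$ has $n - 2$ nodes and maximum degree at most $3$, since each triangle has at most three chord edges. I will use three standard facts: (i) $v \in V(G)$ has degree $2$ if and only if the unique triangle containing $v$ is a leaf of $T$; (ii) $\deg_G(u)$ equals $1$ plus the number of triangles of $G$ containing $u$; (iii) if $v$ is such a degree-$2$ vertex, then $G - v$ corresponds to the triangulation with this leaf triangle deleted, so $G - v$ is maximal outerplanar on $n - 1 \ge 3$ vertices. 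This last fact already takes care of the "$G - v$ is maximal outerplanar" part of the conclusion, reducing the task to finding an ear $v$ in a triangle $\{v, u_1, u_2\}$ with $\deg_G(u_i) \le 4$ for some $i$.

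To locate such an ear I will use a longest-path argument in $T$. Let $P = (t_0, t_1, \ldots, t_k)$ be a longest path in $T$; its endpoints $t_0, t_k$ are leaves. The case $k = 1$ corresponds to $n = 4$, where both neighbours of each ear have degree $3$ and there is nothing to prove. Otherwise write $t_0 = \{v, u_1, u_2\}$, where $u_1 u_2$ is the edge shared with $t_1 = \{u_1, u_2, u_3\}$. If $\deg_T(t_1) \le 2$, then $t_1$ has only one further neighbour $t_2$, and the edge it shares with $t_2$ contains $u_3$. Exactly one of $u_1, u_2$ (the one not on this edge) thus lies only in the two triangles $t_0$ and $t_1$, so has degree $3$ in $G$, and $v$ works.

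The main obstacle is the case $\deg_T(t_1) = 3$, where both $u_1$ and $u_2$ may be incident to many triangles, so the ear in $t_0$ does not suffice. Here I crucially invoke that $P$ is longest: $t_1$ has a third neighbour $s \notin \{t_0, t_2\}$, and if $s$ had any other neighbour $s' \ne t_1$ then $(s', s, t_1, t_2, \ldots, t_k)$ would be a path of length $k + 1$ in $T$, contradicting the maximality of $P$. Hence $s$ is a leaf of $T$. Write $s = \{v', y_1, y_2\}$ with $y_1 y_2$ its edge shared with $t_1$; since all three edges of $t_1$ are shared (with $t_0, t_2, s$), the edge $y_1 y_2$ is one of the two remaining edges of $t_1$, say $u_1 u_3$ without loss of generality. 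The triangles of $G$ containing $u_1$ form a connected subpath of $T$ that includes $t_0, t_1, s$; since $t_0$ and $s$ are both leaves of $T$, this subpath cannot extend further, so $u_1$ lies in exactly three triangles and $\deg_G(u_1) = 4$. The ear $v'$ in the leaf $s$ is adjacent to $u_1$, so taking $v := v'$ completes the proof, and by fact (iii) above $G - v$ is again maximal outerplanar.
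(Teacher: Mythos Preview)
Your proof is correct and takes a genuinely different route from the paper's argument. The paper works directly in $G$: it lets $X$ be the set of all degree-$2$ vertices, observes that every vertex outside $X$ has at most two neighbours in $X$, and then applies the $2$-degeneracy of the outerplanar graph $H:=G-X$ to find a vertex $u$ with $d_H(u)\le 2$; this $u$ must have a neighbour $v\in X$, and $d_G(u)\le d_H(u)+2\le 4$ finishes the job. By contrast, you pass to the weak dual tree $T$ and run a longest-path argument, using that the triangles through a fixed vertex form a subpath of $T$ and that the penultimate node $t_1$ on a longest path has all its off-path neighbours as leaves.

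The paper's approach is shorter and avoids any case analysis, needing only the single fact that outerplanar graphs are $2$-degenerate. Your approach is a bit more structural: it pinpoints exactly which ear works and why, and the longest-path trick in the dual tree is a technique that transfers to other problems about triangulated polygons. Both arguments yield the same conclusion with comparable effort; neither dominates the other.
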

\begin{proof}
Consider an outerplanar embedding of $G$, i.e., such that all of its vertices are incident to the unbounded region of the embedding. Since $G$ is maximal outerplanar, the outer face of $G$ forms a Hamiltonian cycle $C$ of $G$, and we can label the vertices of $G$ as $v_1,\ldots,v_n$ in circular order according to their appearance on the outer cycle $C$. Furthermore, the maximality of $G$ implies that every inner face of $G$ forms a triangle, hence, $G$ is a triangulation of the cycle $C$ without interior vertices. Let $X$ be the set of degree $2$-vertices in $G$. Note that $X$ is non-empty: To see this consider a chord of $C$ minimising the distance of its endpoints on $C$. Then this edge is of the form $v_{i-1}v_{i+1}$ for some $i \in \{1,\ldots,n\}$ (addition and subtraction modulo $n$), and we can see that $v_i \in X$. 

Furthermore, pause to note that every vertex in $V(G)\setminus X$ is adjacent to at most $2$ vertices in $X$. Now consider the subgraph $H:=G-X$ obtained by removing all degree $2$-vertices. Clearly the graph $H$ is still outerplanar and hence a spanning subgraph of a maximal outerplanar graph, which (following the above argumentation) contains a vertex of degree at most $2$. Hence, also in $H$  there is a vertex $u$ satisfying $d_H(u) \le 2$. Clearly, $u$ must be adjacent in $G$ to at least one vertex in $X$, for otherwise it would have had degree at most $2$ also in $G$, meaning that $u \in X$, a contradiction. Let $v \in X$ be a neighbour of $u$ in $G$. By the above, $u$ is adjacent to at most two vertices in $X$, which means that $d_G(u) \le d_H(u)+2 \le 4$. Since $v$ is of degree $2$ in $G$ the graph $G-v$ is still maximal outerplanar, and the claim follows.
\end{proof}

\begin{proof}[Proof of Theorem~\ref{thm:outplanar}]
Since the hat guessing number of a graph is monotone under taking subgraphs, it will be sufficient to prove the claim of the theorem for maximal outerplanar graphs.

Using Proposition~\ref{guessnumberchange}, we will show the following slightly stronger statement for all maximal outerplanar graphs (note that a maximal outerplanar graph has no vertices of degree $<2$): 
\paragraph{Claim.} If $G$ is a maximal outerplanar graph, and if $g:V(G) \rightarrow \mathbb{N}$ is defined by $$g(w):=\begin{cases}1, & \text{if }d_G(w) \ge 4, \cr 2, & \text{if }d_G(w)=3, \cr 4, & \text{if }d_G(w)=2, \end{cases}$$ then $\text{HG}_g(G) \le 40$.

\bigskip

The rest of the proof is devoted to verifying the above claim. We do this by induction of the number $v(G)$ of vertices of the maximal outerplanar graph $G$. If $v(G)=3$, then $G=K_3$, so we have $g \equiv 4$, and consequently $HG_g(G) \le 12 \le 40$.
For the inductive step, assume $v(G) \ge 4$ and that the statement of the claim holds for maximal outerplanar graphs on at most $v(G)-1$ vertices. By Lemma~\ref{lem:outer}, if $G$ is a maximal outerplanar graph on more than $3$ vertices, then there exists a vertex $v \in V(G)$ such that $d_G(v)=2$, $G':=G-v$ is a maximal outerplanar graph, and such that at least one neighbour of $v$ has degree at most $4$ in $G$. Pick such a vertex $v$, and denote its neighbours by $w_1, w_2$, such that $d_G(w_1) \le 4$. By the induction hypothesis, we have $\text{HG}_{g'}(G') \le 40$, where $g':V(G)\setminus \{v\} \rightarrow \mathbb{N}$ is defined by

$$g'(w):=\begin{cases}1, & \text{if }d_{G'}(w) \ge 4, \cr 2, & \text{if }d_{G'}(w)=3, \cr 4, & \text{if }d_{G'}(w)=2, \end{cases}$$

Let $q:=41$ and let us show, using Proposition~\ref{guessnumberchange}, that $\text{HG}_g(G)<q$ (which will then prove the inductive claim). According to the statement of the Proposition, it suffices to verify the following inequality:
$$\frac{g(v)}{41}+\frac{g(w_1)}{g'(w_1)+1}+\frac{g(w_2)}{g'(w_2)+1}<1.$$

By definition, we have $g(v)=4$. 
Note that since $G-v$ is maximal outerplanar, $w_1$ has degree at least two in $G-v$, and hence $d_G(w_1) \in \{3,4\}$. If $d_G(w_1)=3$, then $d_{G'}(w_1)=2$, and hence $\frac{g(w_1)}{g'(w_1)+1}=\frac{2}{5}$. If $d_G(w_1)=4$, then $\frac{g(w_1)}{g'(w_1)+1}=\frac{1}{3}<\frac{2}{5}$, so $\frac{g(w_1)}{g'(w_1)+1} \le \frac{2}{5}$ in any case.

Finally, if $d_{G'}(w_2) \ge 4$, then $g(w_2)=g'(w_2)=1$ by definition, and hence $\frac{g(w_2)}{g'(w_2)+1}=\frac{1}{2}$. Otherwise, if $d_{G'}(w_2) \le 3$, then $g'(w_2)=2g(w_2)$ (since $d_G(w_2)=d_{G'}(w_2)+1$). This yields $\frac{g(w_2)}{g'(w_2)+1} <\frac{g(w_2)}{2g(w_2)}=\frac{1}{2}$. In every case, we may conclude that
$$\frac{g(v)}{41}+\frac{g(w_1)}{g'(w_1)+1}+\frac{g(w_2)}{g'(w_2)+1} \le \frac{4}{41}+\frac{2}{5}+\frac{1}{2}<1,$$ and this concludes the proof of the claim. \end{proof}

\section{Graph classes with bounded strong degeneracy}\label{sec:boundeddeg}

In this section, we analyse classes of graphs with bounded strong degeneracy, and in particular prove Theorem~\ref{thm:char}, which characterises such graph classes.

Let us consider a fixed number $d \in \mathbb{N}$. What prevents a given graph $G$ from being strongly $d$-degenerate? The following are canonical obstacles which could occur.
\begin{itemize}
\item A first possibility is that $G$ contains the complete bipartite graph $K_{2,s}$ for some number $s>d$ as a subgraph. In this case, $K_{2,s}$ contains no $d$-removable vertex, since all its vertices have degree $s>d$, or have two neighbours of degree $s>d$ in their neighbourhood. 
\item A second obstruction of a similar type is if $G$ contains as a subgraph the $1$-subdivision $H^{(1)}$ of a graph $H$ of minimum degree $k>d$. 

Since every vertex in $H^{(1)}$ either has degree at least $k>d$ or is adjacent to two different vertices of degree $k>d$, we can see that $H^{(1)}$ contains no $k$-removable vertex.
\end{itemize}

Given the above two obstructions, it is natural to ask whether graphs excluding both of the obstructions indeed have bounded strong degeneracy. The statement of Theorem~\ref{thm:char} is exactly this, namely, that if a graph does not contain a large $K_{2,s}$-subgraph nor a $1$-subdivision of a graph of large minimum degree, then its strong degeneracy is bounded by a constant. The proof of Theorem~\ref{thm:char} will be done via bounds on the strong degeneracy of a graph in terms of so-called \emph{top-grads} (defined below). In the following, let us start by introducing useful notation and some definitions taken from the book by Ne\v{s}et\v{r}il and Ossona de Mendez about sparsity in graphs~\cite{sparsity}. These have become fairly standard by now in the graph sparsity community, and using them here simplifies the statements of our results significantly and makes it easier to combine them with known results about sparse graph classes.

Let a graph $G$ and an integer $r \ge 0$ be given. We say that a graph $H$ is a \emph{shallow minor} of $G$ at \emph{depth} $r$, if there exists a collection $\mathcal{P}$ of disjoint subsets $V_1,\ldots,V_p$ of $V(G)$ such that:
\begin{itemize}
    \item Each graph $G[V_i], i=1,\ldots,p$ has radius at most $r$: there exists in each set $V_i$ a vertex $x_i \in V_i$ (a \emph{center}) such that every vertex in $V_i$ is at distance at most $r$ from $x_i$ in $G[V_i]$,
    \item $H$ is a subgraph of the graph $G/\mathcal{P}$: each vertex $v$ of $H$ corresponds (in an injective way) to a set $V_{i(v)} \in \mathcal{P}$ and two adjacent vertices $u, v$ of $H$ correspond to two sets $V_{i(u)}$ and $V_{i(v)}$ linked by at least one edge.
\end{itemize}

Similarly, we say that $H$ is a \emph{shallow topological minor} of $G$ at depth $a$ (where $a$ is a non-negative half-integer) if $H$ can be obtained from $G$ by taking a subgraph and then replacing an internally vertex-disjoint family of paths of length at most $2a+1$ by single edges. In other words, $G$ contains as a subgraph a subdivision of $H$ in which every subdivision-path has length at most $2a+1$.
Given an integer $r \ge 0$ and a half-integer $a \ge 0$, we denote by $G \ \grad \ r$ and $G \ \widetilde{\grad} \ a$ the sets of all graphs which are shallow minors of $G$ at depth $r$, or shallow topological minors of $G$ at depth $a$, respectively. Given a non-empty graph $H$, its \emph{density} is defined as $\frac{e(H)}{v(H)}$. Given a graph $G$, an integer $r \ge 0$ and a half-integer $a \ge 0$, the \emph{greatest reduced average density} (shortly \emph{grad}) with rank $r$ of $G$ is defined as the maximum density of graphs in $G \ \grad \ r$, i.e.,
$$\grad_r(G):=\max \left\{\frac{e(H)}{v(H)}\bigg| H \in G \ \grad \ r \right\}.$$

Similarly, the \emph{topological greatest reduced average density} (short \emph{top-grad}) with rank $a$ of $G$ is the maximum density of graphs in $G \ \widetilde{\grad} \ a$, i.e., 
$$\widetilde{\grad}_a(G):=\max \left\{\frac{e(H)}{v(H)}\bigg| H \in G \ \widetilde{\grad} \ a \right\}.$$

We are now ready for proving two different bounds on the strong degeneracy in terms of grads and top-grads, in the form of Propositions~\ref{strongdeg:sufficient} and~\ref{strongdeg:sufficient2}. 

\begin{proposition}\label{strongdeg:sufficient}
Let $s \ge 2$ and let $G$ be a $K_{2,s}$-free graph. Then $G$ is strongly $d$-degenerate, where $d=d(G):=\lfloor(2\widetilde{\grad}_0(G)-2)(s-1)\widetilde{\grad}_\frac{1}{2}(G)+2\widetilde{\grad}_0(G) \rfloor < 2s\widetilde{\grad}_0(G)\widetilde{\grad}_\frac{1}{2}(G) \le 2s \widetilde{\grad}_\frac{1}{2}(G)^2$.
\end{proposition}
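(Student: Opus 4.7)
The plan is to show that every non-empty subgraph $G'$ of $G$ has a $d$-removable vertex for $d$ as in the statement. Since $G'$ is again $K_{2,s}$-free and the top-grads $\widetilde{\grad}_0, \widetilde{\grad}_{1/2}$ are monotone under subgraphs, it suffices to prove the following local statement: for every non-empty $K_{2,s}$-free graph $H$ with $\widetilde{\grad}_0(H) \ge 1$, the integer $d^* := \lfloor (2\widetilde{\grad}_0(H)-2)(s-1)\widetilde{\grad}_{1/2}(H) + 2\widetilde{\grad}_0(H)\rfloor$ has the property that $H$ contains a $d^*$-removable vertex. The case $\widetilde{\grad}_0(H)<1$ is trivial: $H$ is then a forest, so any leaf is $d^*$-removable.

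Suppose for contradiction some such $H$ has no $d^*$-removable vertex. Partition $V(H)=V_H\sqcup L$ with $V_H:=\{v:d_H(v)>d^*\}$ (``high'') and $L$ (``low''); by non-$d^*$-removability every $v\in L$ has at least two neighbors in $V_H$. For each $v\in L$ pick such a pair $p(v)=\{a_v,b_v\}\subseteq V_H$, and let $P:=\{p(v):v\in L\}$. I build a shallow topological minor $M$ of $H$ at depth $1/2$ with vertex set $V_H$ and edge set $E(H[V_H])\cup P$ as follows: for each $p\in P\setminus E(H[V_H])$ pick a witness $w_p\in L$ with $p(w_p)=p$; the map $p\mapsto w_p$ is automatically injective (since $w_p$ determines $p(w_p)$), so the length-$2$ paths $a{-}w_p{-}b$ for $p=\{a,b\}$ are internally vertex-disjoint. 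Taking the subgraph of $H$ spanned by these paths together with $E(H[V_H])$ and contracting each length-$2$ path to the edge $\{a,b\}$ yields $M$. Because $H$ is $K_{2,s}$-free, each pair in $P$ has at most $s-1$ common low neighbours in $H$, whence $|P|\ge |L|/(s-1)$; on the other hand, the top-grad bound yields $|E(M)|\le\widetilde{\grad}_{1/2}(H)\cdot|V_H|$. Combining,
\[
|L|\le (s-1)\widetilde{\grad}_{1/2}(H)\cdot|V_H|.
\]

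A double-count of edges then finishes the argument. Summing degrees in $H$,
\[
(d^*+1)|V_H|+2|L|\le\sum_{v\in V(H)}d_H(v)=2e(H)\le 2\widetilde{\grad}_0(H)\bigl(|V_H|+|L|\bigr),
\]
which rearranges to $(d^*+1)|V_H|\le 2\widetilde{\grad}_0(H)|V_H|+(2\widetilde{\grad}_0(H)-2)|L|$. Dividing by $|V_H|$ and substituting the previous bound on $|L|/|V_H|$ (valid since $2\widetilde{\grad}_0(H)-2\ge 0$), we get
\[
d^*+1\le (2\widetilde{\grad}_0(H)-2)(s-1)\widetilde{\grad}_{1/2}(H)+2\widetilde{\grad}_0(H),
\]
contradicting $d^*=\lfloor\cdot\rfloor$. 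The secondary estimates $d<2s\widetilde{\grad}_0\widetilde{\grad}_{1/2}\le 2s\widetilde{\grad}_{1/2}^2$ are then purely algebraic, using $\widetilde{\grad}_0\le\widetilde{\grad}_{1/2}$. I expect the principal subtlety to be setting up the topological minor cleanly: ensuring distinct internal witnesses $w_p$ (so paths really are internally vertex-disjoint) and avoiding double edges when some pairs in $P$ already lie in $E(H[V_H])$.
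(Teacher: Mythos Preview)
Your proof is correct and follows essentially the same route as the paper's: partition into high- and low-degree vertices, use $K_{2,s}$-freeness together with the depth-$\tfrac{1}{2}$ top-grad to bound $|L|$ against $|V_H|$, and combine this with the edge-count inequality coming from $\widetilde{\grad}_0$ to reach a contradiction with the definition of $d$. The only cosmetic differences are that the paper phrases the reduction via a minimal counterexample rather than subgraph monotonicity, omits the edges of $H[V_H]$ from its auxiliary minor (it uses only the set $P$ of selected pairs), and does not explicitly separate out the forest case $\widetilde{\grad}_0<1$.
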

\begin{proof}
Towards a contradiction, suppose that there exists a $K_{2,s}$-free graph $G$ which is not strongly $d(G)$-degenerate. Assume further that $G$ is such a graph minimising $v(G)$. 

Note that $G$ does not contain a $d$-removable vertex $v$, for otherwise the graph $G-v$ (by minimality of $G$) would satisfy the assertion of the proposition, and hence be strongly $d(G-v)$-degenerate (and therefore also strongly $d$-degenerate, since $d(G-v) \le d(G)=d$). But then also $G$ would be strongly $d$-degenerate, since every subgraph $G' \subseteq G$ is either a subgraph of $G-v$ (and hence contains a $d$-removable vertex), or contains the $d$-removable vertex $v$. This yields a contradiction to our assumptions about $G$.

Moving on, we may therefore assume that $G$ contains no $d$-removable vertex. Split the vertex-set of $G$ into two sets $A$ and $B$, defined by $A:=\{w \in V(G)|d_G(w) \le d\}$ and $B:=\{w \in V(G)|d_G(w)>d\}$. Observe that every vertex in $A$ has at least two neighbours in $B$, for otherwise it would be $d$-removable. In particular, $B \neq \emptyset$. 
Since $G \in G \ \widetilde{\nabla} \ 0$, we have $e(G) \le \widetilde{\grad}_0(G)v(G)=\widetilde{\grad}_0(G)(|A|+|B|)$. By the handshake-lemma, we further know that 
$$2e(G)=\sum_{w \in V(G)}{d_G(w)} \ge 2|A|+(d+1)|B|.$$
Putting these inequalities together yields $$2|A|+(d+1)|B| \le 2\widetilde{\grad}_0(G)(|A|+|B|),$$ and hence we obtain 
\begin{equation}\label{eq3}(d+1-2\widetilde{\grad}_0(G))|B| \le (2\widetilde{\grad}_0(G)-2)|A|.\end{equation}
Next, for every vertex $a \in A$ select a pair $b_1(a), b_2(a) \in N_G(a) \cap B$ of two distinct neighbours in $B$. Let $H$ be the graph with vertex-set $B$ whose edges are $E(H):=\{b_1(a)b_2(a)|a \in A\}$. It is clear from the definition that the $1$-subdivision $H^{(1)}$ is isomorphic to a subgraph of $G$, and hence $H \in G \ \widetilde{\nabla} \ \frac{1}{2}$. We therefore have $e(H) \le \widetilde{\grad}_\frac{1}{2}(G) v(H)=\widetilde{\grad}_\frac{1}{2}(G)|B|$. Note that since $G$ is $K_{2,s}$-free, for every fixed edge $b_1b_2 \in E(H)$, there exist at most $s-1$ different vertices $a \in A$ such that $\{b_1(a),b_2(a)\}=\{b_1,b_2\}$. As a consequence, we find that
\begin{equation}\label{eq4}|A| \le (s-1)e(H)\le(s-1)\widetilde{\grad}_\frac{1}{2}(G)|B|.\end{equation}
Combining \eqref{eq3} and~\eqref{eq4}, we conclude
$$(d+1-2\widetilde{\grad}_0(G))|B| \le (2\widetilde{\grad}_0(G)-2)(s-1)\widetilde{\grad}_\frac{1}{2}(G)|B|.$$
Since $d+1-2\widetilde{\grad}_0(G)>(2\widetilde{\grad}_0(G)-2)(s-1)\widetilde{\grad}_\frac{1}{2}(G)$ by definition of $d$, this shows that $B=\emptyset$, a contradiction. 
\end{proof}

Equipped with Proposition~\ref{strongdeg:sufficient}, we are ready to present the (short) derivation of Theorem~\ref{thm:char}. 

\begin{proof}[Proof of Theorem~\ref{thm:char}]
Let $\mathcal{G}$ be a class of graphs closed under taking subgraphs. For the first direction of the equivalence, suppose that there exists $d \in \mathbb{N}$ such that every graph $G \in \mathcal{G}$ is strongly $d$-degenerate. Let $s:=d+1$ and $k:=d+1$. We have seen at the beginning of this section that $K_{2,s}$ is not strongly $d$-degenerate, and hence $K_{2,s} \notin \mathcal{G}$. Secondly, for every graph $H$ of minimum degree $k$, we have seen in the beginning of the section that its $1$-subdivision $H^{(1)}$ is not strongly $d$-degenerate, and hence $H^{(1)} \notin \mathcal{G}$, as required. 

For the reversed direction of the equivalence, suppose we are given integers $s, k \in \mathbb{N}$ such that $K_{2,s} \notin \mathcal{G}$ and such that for every graph $H$ of minimum degree at least $k$, we have for the $1$-subdivision $H^{(1)}$ that $H^{(1)} \notin \mathcal{G}$. 

Let now $G \in \mathcal{G}$ be given. Since $\mathcal{G}$ is closed under taking subgraphs, $G$ is $K_{2,s}$-free, and hence it follows from Proposition~\ref{strongdeg:sufficient} that $G$ is strongly $d$-degenerate for some integer $d \le 2s \widetilde{\grad}_\frac{1}{2}(G)^2$. 

Hence, if we can show that there exists an absolute constant $C>0$ such that $\widetilde{\grad}_\frac{1}{2}(G) \le C$ for every $G \in \mathcal{G}$, then it will follow that graphs in $\mathcal{G}$ have bounded strong degeneracy, and the proof of the second part of the equivalence will be complete. 

So let $G \in \mathcal{G}$ given arbitrarily, and let  $\alpha:=\widetilde{\grad}_\frac{1}{2}(G)-1$. Pick a graph $H \in G \ \widetilde{\grad} \ \frac{1}{2}$ such that $\frac{e(H)}{v(H)} \ge \alpha$. By definition of $G \ \widetilde{\grad} \ \frac{1}{2}$, the graph $H$ is obtained from $G$ by first taking a subgraph, and then replacing a set of internally disjoint paths of length two by direct connections between their endpoints. In other words, $G$ contains a subgraph $H^{(t)}$ which can be obtained from $H$ (up to isomorphism) by subdividing a subset $E_1 \subseteq E(H)$ of the edges of $H$ with a single vertex. Write $E_2:=E(H)\setminus E_1$ for the complement of $E_1$ within the edge-set of $H$. Then we have $$\alpha \le \frac{e(H)}{v(H)}=\frac{|E_1|}{v(H)}+\frac{|E_2|}{v(H)}.$$ 

We claim that the spanning subgraph $H_1:=(V(H),E_1)$ of $H$ is $(k-1)$-degenerate. Indeed, suppose not, then there exists a subgraph $F \subseteq H_1$ of minimum degree at least $k$. Then $G$ contains $H^{(t)}$ as a subgraph, which by definition of $E_1$ in turn contains the $1$-subdivision of $H_1$ and hence the $1$-subdivision of $F$ as a subgraph (up to isomorphism). Hence, a graph isomorphic to a $1$-subdivision of $F$ is contained in $\mathcal{G}$, which contradicts the facts that $F$ has minimum degree at least $k$ and our initial assumptions on the class $\mathcal{G}$. 
Hence, we have established that $H_1$ is $(k-1)$-degenerate, and it follows that $\frac{|E_1|}{v(H)}=\frac{e(H_1)}{v(H_1)} \le k-1$. 

Next, we claim that the spanning subgraph $H_2:=(V(H),E_2)$ of $H$ has average degree less than $M\cdot (k+1)^2\log(k+1)$ for some absolute constant $M>0$. Indeed, this follows (for instance) from Lemma~1 in~\cite{dvorak}, which implies that for some absolute constant $M>0$, every graph of average degree at least $M\cdot (k+1)^2\log(k+1)$ contains as a subgraph the $1$-subdivision of a graph of chromatic number at least $k+1$, and hence also a $1$-subdivision of a graph of minimum degree at least $k$. However, since $H_2$ (by definition of $E_2$) is isomorphic to a subgraph of $H^{(t)}$, which in turn is included in $G$, it follows by our initial assumptions on $\mathcal{G}$ that $H$ does not contain a $1$-subdivision of a graph of minimum degree at least $k$, establishing the above intermediate claim. 

Hence, we have $\frac{|E_2|}{v(H_2)}=\frac{1}{2}\frac{2e(H_2)}{v(H_2)} < \frac{M}{2}(k+1)^2\log(k+1)$. 

Putting everything together, we find that
$$\widetilde{\grad}_\frac{1}{2}(G)-1 = \alpha \le (k-1)+\frac{M}{2}(k+1)^2\log(k+1).$$
Since our choice of $G \in \mathcal{G}$ was arbitrary, it follows that $C:=k+\frac{M}{2}(k+1)^2\log(k+1)$ is a constant for which we have $\widetilde{\grad}_\frac{1}{2}(G) \le C$ for every $G \in \mathcal{G}$. As discussed above, this implies that there exists a constant $d \in \mathbb{N}$ such that every graph in $\mathcal{G}$ is strongly $d$-degenerate. This concludes the proof of the second direction of the equivalence, and hence also the proof of Theorem~\ref{thm:char}.
\end{proof}

While Proposition~\ref{strongdeg:sufficient} bounds the strong degeneracy of a $K_{2,s}$-free graph by a quadratic function of its top-grad at depth $\frac{1}{2}$, in the next proposition, we show that the strong degeneracy of a $K_{2,s}$-free graph can also be bounded by a \emph{linear} function of its grad at depth $1$.

\begin{proposition}\label{strongdeg:sufficient2}
Let $s \ge 2$ and let $G$ be a $K_{2,s}$-free graph. Then $G$ is strongly $d$-degenerate, where $d=d(G):=\lfloor 2s\grad_1(G) \rfloor$. 
\end{proposition}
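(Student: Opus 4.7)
The plan is to imitate the proof of Proposition~\ref{strongdeg:sufficient}, replacing the depth-$\tfrac{1}{2}$ topological shallow minor by a genuine shallow minor at depth $1$, which will save one factor of the grad parameter. Assume by contradiction that $G$ is a $K_{2,s}$-free graph that is not strongly $d$-degenerate with $d := \lfloor 2s\,\grad_1(G)\rfloor$, and that $G$ minimises $v(G)$ among such graphs. As in the proof of Proposition~\ref{strongdeg:sufficient}, the minimality of $G$ together with the monotonicity $\grad_1(G-v) \le \grad_1(G)$ implies that $G$ itself contains no $d$-removable vertex. Partition $V(G) = A \sqcup B$ with $A := \{v : d_G(v) \le d\}$ and $B := \{v : d_G(v) > d\}$; then each $a \in A$ must satisfy $|N_G(a) \cap B| \ge 2$, and the case $B = \emptyset$ would force every vertex to be $d$-removable, a contradiction.

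To construct the minor, pick for every $a \in A$ an arbitrary $\phi(a) \in N_G(a) \cap B$ and set $V_b := \{b\} \cup \phi^{-1}(b)$ for each $b \in B$. Each $V_b$ is a star centred at $b$ in $G$, so $\mathcal{P} = \{V_b : b \in B\}$ is a partition of $V(G)$ of radius at most $1$, witnessing that $H := G/\mathcal{P}$ lies in $G\ \grad\ 1$, and in particular $e(H) \le \grad_1(G)\,|B|$. The combinatorial heart of the argument is the inequality
$$\sum_{a \in A}\bigl(|N_G(a)\cap B| - 1\bigr) \;\le\; (s-1)\,e(H),$$
whose left-hand side counts pairs $(a,b')$ with $b' \in (N_G(a)\cap B)\setminus\{\phi(a)\}$; each such pair produces the edge $\{\phi(a), b'\} \in E(H)$, and for a fixed edge $\{b_1,b_2\} \in E(H)$ any contributing $a$ must be a common $G$-neighbour of $b_1$ and $b_2$, of which there are at most $s-1$ by $K_{2,s}$-freeness. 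Since every term on the left is $\ge 1$, this also yields $|A| \le (s-1)\grad_1(G)\,|B|$.

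To conclude, I double-count the edges incident to $B$. Since $G[B]$ embeds into $H$ in the obvious way, we have $e(G[B]) \le e(H) \le \grad_1(G)\,|B|$, and
$$e(A,B) \;=\; \sum_{a \in A}|N_G(a)\cap B| \;\le\; (s-1)\,e(H) + |A| \;\le\; 2(s-1)\,\grad_1(G)\,|B|.$$
Combined with $\sum_{b \in B} d_G(b) \ge (d+1)|B|$, this gives
$$(d+1)|B| \;\le\; 2\,e(G[B]) + e(A,B) \;\le\; 2s\,\grad_1(G)\,|B|,$$
which for $B \ne \emptyset$ contradicts $d + 1 > 2s\,\grad_1(G)$. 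The main delicate step is the combinatorial bound in the second paragraph; this is where $K_{2,s}$-freeness and the passage from a depth-$\tfrac{1}{2}$ topological minor to a depth-$1$ shallow minor conspire to replace the quadratic dependence on the grad parameter in Proposition~\ref{strongdeg:sufficient} by a linear one.
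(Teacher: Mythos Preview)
Your proof is correct and follows essentially the same approach as the paper: take a minimal counterexample, split into $A$ and $B$, contract stars centred at $B$-vertices to obtain a depth-$1$ shallow minor $H$, and combine $e(H)\le\grad_1(G)|B|$ with $K_{2,s}$-freeness to reach $(d+1)|B|\le 2s\,\grad_1(G)|B|$. The only difference is bookkeeping: the paper picks \emph{two} designated $B$-neighbours $b(a),b'(a)$ for each $a\in A$ and proves the pointwise bound $d_G(b)\le s\,d_H(b)$ via a fibre-counting map $N_G(b)\to N_H(b)$, whereas you pick one neighbour $\phi(a)$, take the full contraction $G/\mathcal{P}$, and split $\sum_{b\in B}d_G(b)=2e(G[B])+e(A,B)$ into two pieces bounded separately; both routes arrive at the identical inequality.
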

\begin{proof}
As above, suppose towards a contradiction that there exists a $K_{2,s}$-free graph $G$ which is not strongly $d(G)$-degenerate, and assume that $G$ minimizes $v(G)$ with respect to these properties. 
Just as in the proof of Proposition~\ref{strongdeg:sufficient}, using that $d(G-v) \le d(G)$ for every $v \in V(G)$ and the minimality of $G$, we find that $G$ contains no $d$-removable vertex. Let $A:=\{w \in V(G)|d_G(w) \le d\}$ and $B:=\{w \in V(G)|d_G(w)>d\}$, note that every vertex in $A$ has at least two neighbours in $B$, and hence $B \neq \emptyset$. 
Next, for every $a \in A$, select and fix (arbitrarily) two distinct neighbours $b(a), b'(a) \in B$ of $a$ in $G$. Let $H$ be the graph with vertex-set $B$ and whose edges are \[E(H):=E(G[B]) \cup \{b(a)b|a \in A, b \in B, ab \in E(G), b(a) \neq b\}.\] We claim that $H \in G \ \grad \ 1$. Indeed, it is readily verified that $H$ can be obtained from $G$ by first deleting all edges in $G[A]$, and then, for every $b \in B$, contracting $X_b:=\{b\}\cup\{a \in A|b(a)=b\}$ into a vertex (note that $G[X_b]$ is a star centered at $b$ and hence has radius $1$). From this, we may conclude that 

\begin{equation}\label{eq5}e(H) \le \grad_1(G) v(H)=\grad_1(G) |B|.\end{equation}

Let us next bound the minimum degree of $H$. So consider any vertex $b \in B$. Let us define a mapping $\phi:N_G(b) \rightarrow N_H(b)$, by putting $\phi(x):=x$ for every $x \in N_G(b) \cap B$,  $\phi(x):=b(x)$ for every $x \in N_G(b) \cap A$ such that $b(x) \neq b$, and $\phi(x):=b'(x)$ for every $x \in N_G(b) \cap A$ such that $b(x)=b$. Pause to note that this mapping is well-defined.

Next consider any fixed vertex $c \in N_H(b)$. Then we have by definition of $\phi$:
$$\phi^{-1}(c) \subseteq \{c\} \cup \{a \in N_G(b) \cap A|b(a)=c\} \cup \{a \in N_G(b) \cap A|b'(a)=c, b(a)=b\}$$ $$\subseteq \{c\} \cup \{a \in A|ab, ac \in E(G)\}.$$
Since $G$ contains no $K_{2,s}$-subgraph, we can see from this that $|\phi^{-1}(c)| \le 1+(s-1)=s$. Since $c \in N_H(b)$ was chosen arbitrarily, it follows that $d+1 \le d_G(b)=\sum_{c \in N_H(b)}{|\phi^{-1}(c)|} \le s d_H(b)$ for every $b \in B$. Summing this inequality over all $b \in B$, we have
\begin{equation}\label{eq6}(d+1)|B| \le s\sum_{b \in B}{d_H(b)}=2s\cdot e(H).\end{equation}
Combining with~\eqref{eq5}, we find that $(d+1)|B| \le 2s\grad_1(G)|B|$. Since $d+1>2s\grad_1(G)$, this yields $B=\emptyset$, a contradiction. This shows that our initial assumption concerning the existence of $G$ was wrong, and concludes the proof.
\end{proof}

Proposition~\ref{strongdeg:sufficient} and Proposition~\ref{strongdeg:sufficient2} allow us to deduce that many sparse classes of graphs have bounded strong degeneracy, and hence (by Theorem~\ref{thm:hattiness}) also have a constant bound on their hat guessing number. In particular, below we can now give the (short) proofs of Corollaries~\ref{hadwiger} and~\ref{topological}. 

\begin{proof}[Proof of Corollary~\ref{hadwiger}]
It was proved independently by Kostochka~\cite{kostochka} and Thomason~\cite{thomason} that there exists a constant $C>0$ such that every graph $H$ containing no $K_t$-minor satisfies $\frac{e(H)}{v(H)} \le Ct\sqrt{\log t}$. It follows easily from the definition that for every $K_{2,s}$-free graph $G$ without a $K_t$-minor, also all the graphs in $G \ \grad \ r$ do not contain a $K_t$-minor, for every integer $r \ge 0$. Hence, $\grad_r(G) \le Ct\sqrt{\log t}$ for every choice of $r$, and Proposition~\ref{strongdeg:sufficient2} yields that $G$ is strongly $d$-degenerate for every integer $d$ such that $d \ge 2s \grad_1(G)=O(st\sqrt{\log t})$. This proves the claim (the upper bound on the hat guessing number follows directly from Theorem~\ref{thm:hattiness}). 
\end{proof}
\begin{proof}[Proof of Corollary~\ref{topological}]
It was proved independently by Koml\'{o}s and Szemer\'{e}di~\cite{komlos} and Bollob\'{a}s and Thomason~\cite{bollobas} that there exists a constant $C>0$ such that every graph $H$ containing no $K_t$-subdivision satisfies $\frac{e(H)}{v(H)} \le Ct^2$. It follows easily from the definition that for every $K_{2,s}$-free graph $G$ without a $K_t$-subdivision, also all the graphs in $G \ \widetilde{\grad} \ a$ do not contain a $K_t$-subdivision, for every half-integer $a \ge 0$. Hence, $\widetilde{\grad}_a(G) \le Ct^2$ for every choice of $a$, and Proposition~\ref{strongdeg:sufficient} yields that $G$ is strongly $d$-degenerate for every integer $d$ such that $d \ge 2s \widetilde{\grad}_\frac{1}{2}(G)^2=O(st^4)$. This proves the claim (again, the upper bound on the hat guessing number follows from Theorem~\ref{thm:hattiness}). 
\end{proof}

A central notion in the book of Ne\v{s}et\v{r}il and Ossona de Mendez~\cite{sparsity} constitutes the concept of graph classes of \emph{bounded expansion} (we also refer to~\cite{charac1}, and the enormous amount of other articles written on the topic of bounded expansion, which we cannot list here, for further background). A graph class $\mathcal{G}$ is said to have \emph{bounded expansion} if for every integer $r \ge 0$, there exists a constant $C_r$ such that every graph $G\in\mathcal{G}$ satisfies $\grad_r(G) \le C_r$. Equivalently (compare~\cite{sparsity,charac1}), $\mathcal{G}$ has bounded expansion if and only if for every half-integer $a \ge 0$ there exists a constant $C_a'$ such that $\widetilde{\grad}_a(G) \le C_{a}'$ for every $G \in \mathcal{G}$. 

Graph classes of bounded expansion unify several well-studied classes of sparse graphs, such as graphs with no $K_t$-subdivision, graphs with no $K_t$-minor (as addressed in the previous two corollaries), graphs of bounded maximum degree, string graphs with forbidden complete bipartite subgraphs, sparse random graphs, and many more, cf.~\cite{charac2,sparsity,charac1,charac3}. Since in particular, the (top-)grads $\grad_1(G), \widetilde{\grad}_0(G), \widetilde{\grad}_{\frac{1}{2}}(G)$ are bounded by a constant for all members $G$ of a graph-class with bounded expansion, the claim of Corollary~\ref{boundedexpansion} immediately follows by combining optionally Proposition~\ref{strongdeg:sufficient} or Proposition~\ref{strongdeg:sufficient2} with Theorem~\ref{thm:hattiness}. 

\begin{proof}[Proof of Corollary~\ref{boundedexpansion}]
Let $\mathcal{G}$ be a class of graphs with bounded expansion, and let $s \in \mathbb{N}$. Then there exists a constant $C_1>0$ such that every $G \in \mathcal{G}$ satisfies $\grad_1(G) \le C_1$.  By Proposition~\ref{strongdeg:sufficient2}, we know that every $K_{2,s}$-free graph $G \in \mathcal{G}$ is strongly $d$-degenerate where $d:=\lfloor 2s\grad_1(G) \rfloor \le \lfloor 2sC_1\rfloor$, and hence, the maximum hat guessing number of $K_{2,s}$-free graphs in $\mathcal{G}$ is bounded by a function of $s$ and $\mathcal{G}$ acoording to Theorem~\ref{thm:hattiness}. This verifies the claim. 
\end{proof}

A nice application of Corollary~\ref{boundedexpansion} is that it allows us to give a short proof of Corollary~\ref{random}, stating that for every fixed value $C>0$, the hat guessing number of sparse Erd\H{o}s-Renyi-random graphs $G(n,\frac{C}{n})$ is upper-bounded by a constant solely dependent on $C$, w.h.p.. 

\begin{proof}[Proof of Corollary~\ref{random}]
Let us start by noting that w.h.p. as $n \rightarrow \infty$, the random graph $G(n,\frac{C}{n})$ is $K_{2,3}$-free:
Indeed, since $v(K_{2,3})=5, e(K_{2,3})=6$, the expected number of copies of $K_{2,3}$ in $G(n,\frac{C}{n})$ is upper-bounded by 
$$n^5\left(\frac{C}{n}\right)^6=\frac{C^6}{n} \rightarrow 0$$ as $n \rightarrow \infty$, and hence the claim follows by the first-moment principle. 

Second, it was proved by Ne\v{s}et\v{r}il and Ossona de Mendez in~\cite[Chapter 14.1]{sparsity} that there exists a bounded-expansion class $\mathcal{G}_C$ of graphs (depending solely on the choice of $C$) such that $G(n,\frac{C}{n})$ is a member of $\mathcal{G}_C$ w.h.p. as $n \rightarrow \infty$. The claim now follows directly from Corollary~\ref{boundedexpansion}. 
\end{proof}

\section{Open Problems}

Apart from the big challenge of answering Problem~\ref{problems} concerning whether the parameters \emph{hat guessing number} and \emph{degeneracy} are qualitatively tied to one another, several more special natural questions regarding the behaviour of the hat guessing number on certain graph classes have come up during our work on this problem, and we would like to share some of them. 

We have proved in Theorem~\ref{thm:outplanar} that every outerplanar graph $G$ satisfies $\text{HG}(G) \le 40$, and an example called ``trefoil'' constructed in~\cite{kokhas2021hats} shows that there exist outerplanar graphs with hat guessing number~$6$. 
\begin{question}
What is the smallest integer $C$ such that every outerplanar graph $G$ satisfies $\text{HG}(G) \le C$?
\end{question}

As we have shown in Corollary~\ref{boundedexpansion}, all $C_4$-free graphs (or, more generally, $K_{2,s}$-free graphs for fixed~$s$) that belong to a sparse class of graphs (e.g., with bounded expansion) have a hat guessing number which is upper bounded by a universal constant (depending only on the class of graphs). We do suspect that the requirement of ``sparsity'' of the graphs in our results is indeed necessary, but yet we have failed to come up with examples of $C_4$-free graphs that have unbounded hat guessing numbers. We therefore ask the following question, which was also previously posed by Bradshaw in~\cite{bradshaw2021hat}.

\begin{question}
Does there exist, for every given $C>0$, a $C_4$-free graph $G$ such that $\text{HG}(G) \ge C$?
\end{question}

Note that forbidding odd cycles as subgraphs does not bound the hat guessing number, since complete bipartite graphs can have arbitrarily large hat guessing numbers~\cite{alon2020hat}.
One natural approach towards the above question could be to study the hat guessing numbers of the incidence graphs of projective planes, which are known to be dense $C_4$-free graphs. The above would be answered in the positive once the following problem which was first proposed in~\cite{he2020hat} and reiterated in~\cite{bradshaw2021hat} is confirmed:

\begin{question}
Does there exist, for every given $g \in \mathbb{N}$ and $C>0$, a graph $G$ of girth at least $g$ with $\text{HG}(G) \ge C$?
\end{question}

Since constructions of dense graphs of large girth are often based on randomness, in order to approach the previous problem, it might be rewarding to obtain a broader understanding of the hat guessing number of the Erd\H{o}s-Renyi random graphs $G(n,p)$ for a wide range of values for $p$, for instance, if $\frac{1}{n}\ll p \ll 1$. 
In view of Problem~\ref{problems}, (2.), we think it would be particularly interesting to answer the following question. 

\begin{question}
Let $p=p(n)$ be such that $np(n) \rightarrow \infty$. Is it true that for every constant $C>0$, we have $\text{HG}(G(n,p)) > C$ w.h.p.? 
\end{question}

We note that as the hat guessing number is bounded from below by the clique number, the statement holds true for any $p\geq n^{-o(1)}$, but for any $\frac{1}{n} \ll p \leq n^{-\varepsilon}$ for a fixed $\varepsilon>0$ the problem is wide open.

Many results on the hat guessing number seem to indicate a behaviour that is similar in some regards to that of the chromatic number of graphs. This inspired Bosek et al.~\cite{bosek2021hat} to conjecture the following relation between the parameters, which we reiterate here.

\begin{conjecture}
Every graph $G$ satisfies $\chi(G) \le \text{HG}(G)+1$. 
\end{conjecture}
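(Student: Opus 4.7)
The conjecture is equivalent to the statement $\HG(G) \ge \chi(G) - 1$ for every graph $G$. A first observation is that since the hat guessing number is monotone under taking subgraphs while the chromatic number is not, it suffices to establish the bound for \emph{$k$-critical graphs}: those $G$ with $\chi(G)=k$ and $\chi(G-v)<k$ for every vertex $v \in V(G)$. This restriction is useful because $k$-critical graphs have minimum degree at least $k-1$ and a rich structure theory (Gallai trees in their low-degree subgraphs, ear decompositions, etc.). So the plan is to fix such a $G$, together with a proper $k$-colouring $\phi\colon V(G)\to[k]$, and attempt to build an explicit winning strategy for the players in the hat game on $k-1$ colours.

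My first attempt would generalise the classical strategy that realises $\HG(K_n)\ge n-1$, in which each player $v_i$ guesses $s(i)-\sum_{j\ne i} c_j\pmod{n-1}$ for a fixed surjection $s\colon[n]\to[n-1]$; here exactly one player always ``catches'' the global sum modulo $n-1$, since the image of $s$ covers every residue. In a general graph, each vertex $v$ sees only its neighbours, so a natural local analogue would be: vertex $v$ with $\phi(v)=j$ guesses $f_v((c_w)_{w\in N(v)}) := s_j\bigl(\sum_{w\in N(v)} c_w\bigr)$ for carefully chosen functions $s_j\colon \mathbb{Z}_{k-1}\to[k-1]$, one per colour class of $\phi$. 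The hope is that the properness of $\phi$ --- which forces adjacent vertices into different colour classes and hence into different ``roles'' in the strategy --- implies that for every adversarial $c$ some vertex is guaranteed to detect its own hat from its local view. A richer variant would label the edges of $G$ using $\phi$ and let each vertex guess a linear form in $\mathbb{Z}_{k-1}$ of its incident-edge labels, so that the system of all guesses spans all residues simultaneously.

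The key obstacle is that hat strategies lack the clean structure of proper colourings, so converting $\phi$ into a strategy that provably wins against \emph{every} adversarial $c\colon V(G)\to[k-1]$ requires a genuinely new combinatorial idea I do not yet have. A complementary direction worth pursuing is the contrapositive: assume $\chi(G)>q+1$ and a fixed strategy $(f_v)_{v\in V(G)}$ on $q+1$ colours, and attempt to construct a mean adversarial colouring vertex-by-vertex by repeatedly invoking that $G$ admits no proper $(q+1)$-colouring to force an extension step that avoids every $f_v$. I expect the hardest step of either approach to be the verification that the strategy (or the adversary) succeeds on all inputs: hat games are known to admit counterintuitive winning strategies on very specific graphs, and without an invariant linking $\HG$ directly to a colour-theoretic quantity --- perhaps via polynomial, representation-theoretic, or topological methods in the spirit of Alon--Tarsi or Lov\'asz --- it seems hard to settle the conjecture without first developing substantial new machinery connecting hat guessing to proper colouring at a structural level.
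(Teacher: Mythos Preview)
The statement you are addressing is not proved in the paper. It appears in the final section as an open \emph{conjecture}, attributed to Bosek et~al.~\cite{bosek2021hat}, and the authors merely reiterate it. There is therefore no ``paper's own proof'' to compare your attempt against.

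Your proposal is, to your credit, honest about this: it is not a proof but a research outline, and you explicitly acknowledge that the key step ``requires a genuinely new combinatorial idea I do not yet have.'' So the situation is symmetric---neither you nor the paper proves the conjecture.

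A couple of small corrections to your outline nonetheless. First, your reduction to $k$-critical graphs is valid, but the stated justification is off: the chromatic number \emph{is} monotone under taking subgraphs. The correct reasoning is simply that any graph $G$ with $\chi(G)=k$ contains a $k$-critical subgraph $H$, and since $\HG(G)\ge\HG(H)$, it suffices to show $\HG(H)\ge k-1$. Second, your description of the clique strategy is slightly awkward: the standard argument gives $\HG(K_n)=n$ directly (player $i$ guesses $i-\sum_{j\ne i}c_j \pmod{n}$), so there is no need for the surjection $s\colon[n]\to[n-1]$; the bound $\HG(K_n)\ge n-1$ you cite is strictly weaker than what is known. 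Neither of these affects your overall conclusion that the problem is open.
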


\section*{Acknowledgements}

This work was initiated at the research workshop of Angelika Steger in Buchboden, August 2021. We thank Andreas Noever for fruitful discussions.

\bibliographystyle{abbrv}
\bibliography{sources}
\end{document}